\documentclass[11pt]{amsart}
\usepackage{amsfonts, amsmath, enumerate, amssymb, xypic}
\setlength{\textheight}{23cm}
\setlength{\textwidth}{16cm}
\setlength{\topmargin}{-0.8cm}
\setlength{\parskip}{0.3\baselineskip}
\hoffset=-1.4cm

\newtheorem{theorem}{Theorem}[section]
\newtheorem{lemma}[theorem]{Lemma}

\newtheorem{proposition}[theorem]{Proposition}
\newtheorem{remark}[theorem]{Remark}
\newtheorem{definition}[theorem]{Definition}

\newcommand{\ncom}{\newcommand}
\ncom{\lrar}{\longrightarrow}
\ncom{\rar}{\rightarrow}
\ncom{\ov}{\overline}
\ncom{\m}{\mbox}
\ncom{\sta}{\stackrel}
\ncom{\comx}{{\mathbb C}}
\ncom{\A}{{\mathbb A}}
\ncom{\Cl}{{\mathbb C}}
\ncom{\Z}{{\mathbb Z}}
\ncom{\Q}{{\mathbb Q}}
\ncom{\R}{{\mathbb R}}
\ncom{\G}{{\mathbb G}}
\ncom{\al}{\alpha}
\ncom{\p}{{\mathbb P}}
\ncom{\E}{{\mathbb E}}
\ncom{\N}{{\mathbb N}}
\ncom{\K}{{\mathbb K}}
\ncom{\X}{{\mathbb X}}
\ncom{\f}{\frac}
\ncom{\cA}{{\mathcal A}}
\ncom{\cB}{{\mathcal B}}
\ncom{\cX}{{\mathcal X}}
\ncom{\cO}{{\mathcal O}}
\ncom{\cW}{{\mathcal W}}
\ncom{\cP}{{\mathcal P}}
\ncom{\cS}{{\mathcal S}}
\ncom{\cM}{{\mathcal M}}
\ncom{\cC}{{\mathcal C}}
\ncom{\cT}{{\mathcal T}}
\ncom{\cF}{{\mathcal F}}
\ncom{\cN}{{\mathcal N}}
\ncom{\cJ}{{\mathcal J}}
\ncom{\cK}{{\mathcal K}}
\ncom{\cV}{{\mathcal V}}
\ncom{\cZ}{{\mathcal Z}}
\ncom{\cU}{{\mathcal U}}
\ncom{\cSU}{{\mathcal S \mathcal U}}
\ncom{\cG}{{\mathcal G}}
\ncom{\cQ}{{\mathcal Q}}
\ncom{\cY}{{\mathcal Y}}
\ncom{\cE}{{\mathcal E}}
\ncom{\what}{\widehat}
\ncom{\delbar}{\overline{\partial}}

\ncom{\eop}{{\hfill $\Box$}}

\def \cL{\mathcal L}

\def \H{\rm H}
\def \C{\rm C}
\def \CH{\rm {CH}}

\def \Sym{\rm {Sym}}
\def \Pic{\rm {Pic}}
\def \M{\rm M}
\def \bPic{\mathbf {Pic}}
\def \P{\rm P}
\def \NS{\rm {NS}}
\def \Q{\rm Q}
\def \J{\rm J}
\def \ra{\rightarrow}
\def \Br{\rm {Br}}
\def \O{\mathcal O}
\def \limn{\underset {\infty \leftarrow n}  {\hbox {lim}}}
\def \res{respectively}
\def \Sm{\rm Sm}

\def \rmd{\rm d}
\def \rmD{\rm D}
\def \rmX{\rm X}
\def \rmY{\rm Y}
\def \rmp{\rm p}
\def \rmU{\rm U}
\def \rmN{\rm N}
\def \rmE{\rm E}
\def \rmW{\rm W}

\def \Hilb{\rm {Hilb}}
\def \Hom{{\mathcal H}om}
\def \d{\it d}
\def \i{\it i}
\def \t{\it t}
\def \p{\rm p}
\def \rmA{\rm A}
  
\begin{document}
\baselineskip=16pt
\let\thefootnote\relax

\title[Brauer groups of schemes associated to symmetric powers of curves]{Brauer groups of  schemes associated to symmetric powers of smooth projective curves  in arbitrary characteristics}


\author[J. N. Iyer]{Jaya NN  Iyer}

\address{The Institute of Mathematical Sciences, CIT
Campus, Taramani, Chennai 600113, India}
\email{jniyer@imsc.res.in}

\author[R. Joshua]{Roy Joshua}
\address{Department of Mathematics, The Ohio State University, Columbus, Ohio, 43210, USA}
\email{joshua.1@math.osu.edu}

\footnotetext{Mathematics Classification Number: 14C25, 14F20, 14F22, 14D20, 14D23 }
\footnotetext{Keywords: Brauer group, Prym variety, Symmetric product of curves.}

\begin{abstract}
In this paper we show that the $\ell^n$-torsion part of the cohomological Brauer groups of certain schemes associated to symmetric powers of a projective
smooth curve over a separably closed field $k$ are isomorphic, when $\ell$ is invertible in $k$. The schemes considered are the Symmetric powers themselves, 
then the corresponding Picard schemes and also certain Quot-schemes. We also obtain similar results for Prym varieties associated
 to certain finite covers of such curves.
\end{abstract}
\maketitle

\setcounter{tocdepth}{1}
\tableofcontents


\section{Introduction}
The theory of Brauer groups has a long and rich history. In recent years there seems to 
be a renewed interest in this area, and there have been several important developments
in the last 10-15 years using sophisticated techniques. It is also one of the few topics
that can be studied both from a complex algebraic geometry point of view as well as using
algebraic tools, notably \'etale cohomology. 
\vskip .2cm
The present paper originated with the authors being intrigued by the recent paper of Biswas, Dhillon and
Hurtubise (see \cite{BDH}). The above paper is written purely in the context of complex algebraic
geometry, though their main results make sense in any characteristics. One of the results we
obtain here is an extension of their results valid over separably closed
   fields of arbitrary characteristics using motivic and \'etale cohomology techniques.
\vskip .2cm
Let $k$ denote any separably closed field of arbitrary characteristic and let $\C$ denote a projective smooth curve over $k$.
 {\it In this case we will make the standing assumption that the curve $\C$
has a $k$-rational point.}
Then $\Sym^d(\C)$ will denote the $\rmd$-fold symmetric power of the curve, which is a projective smooth variety of dimension $\rmd$. 
$\Pic^d(\C)$ will denote the Picard-scheme of isomorphism classes of line bundles of degree $\rmd$ on $\C$ and $\Q(r, d)$ will
denote the quot-scheme parametrizing degree $\rmd$ quotients of the $\O_{\C}$-module $\O_{\C}^{\oplus r}$.  
Similarly $\Hilb^d(\C)$ will denote the Hilbert-scheme of closed zero dimensional subschemes of length $d$ on the curve $\C$. 
Then it is well-known that  there is an isomorphism 
\begin{equation}
\label{key.isom.1}
\Sym^d(\C) \cong \Hilb^d(\C).
\end{equation}
See, for example, \cite[XVII, Proposition 6.3.9]{SGA4}.
(In fact, it is shown there that there is an isomorphism between the corresponding functors represented by the above schemes.) In view of the above
isomorphism, one may identify points of $\Sym^d(\C)$ with effective zero dimensional cycles of degree $d$ on $\C$.
\vskip .2cm
Clearly there is a natural map $\Hilb^d(\C) \ra Pic^d(\C)$ sending a divisor of degree $d$ to its associated line bundle.
Making use of the isomorphism in ~\eqref{key.isom.1}, this map therefore defines the Abel-Jacbi map of degree $d$:
\begin{equation}
\xi_{\rmd}: \rm{Sym}^d({\C}) \rightarrow \rm{Pic}^d({\C}),\,\,\,\,\,\,\,D \mapsto \cO_{\C}(D).
\end{equation}  
\vskip .2cm
Given a point $Q$ in $\Q(r, d)$, we have the short-exact sequence:
\[ 0 \ra {\mathcal F}(Q) \ra \O_{{\C}}^{\oplus r} \ra Q \ra 0.\]
Sending $Q$ to the scheme-theoretic support of the quotient for the induced homomorphism 
$\wedge ^r {\mathcal F}(Q) \ra \wedge ^r (\O_{\C}^{\oplus r})$, we obtain a 
morphism
\begin{equation}\label{phi}
\phi_{\rmd}: \Q(r, d) \ra \Hilb^d(\C) \cong \Sym^d(C).
\end{equation}
\begin{remark}
When the base field is algebraically closed, the above maps $\phi_{\rmd}$ and $\xi_{\rmd}$ may be defined more directly without making use of
the Hilbert-scheme, since now it suffices to define these maps on
 the corresponding closed points. When the base field is no longer algebraically closed, the use of the isomorphism in ~\eqref{key.isom.1}
 facilitates the definition of the maps $\phi_{\rmd}$ and $\xi_{\rmd}$. (We thank the referee for pointing out this approach to us.)
 \end{remark}
 \vskip .2cm
Throughout the paper,  $\Br'(\rmX)$ will denote the cohomological Brauer group associated to the scheme $\rmX$. Recall this
is defined as the torsion part of the \'etale cohomology group $\H^2_{et}(\rmX, {\mathbb G}_m)$ in general. 
Then the first result of the paper is the following
 theorem.
\begin{theorem}
 \label{main.thm.1}
 If the base field $k$ is  separably closed of characteristic $p\ge 0$, and $\ell$ is any prime invertible in $k$, 
the induced maps $\phi_{\rmd}^*: \Br'(\Sym^d({\C}))_{\ell^n} \ra \Br'(\Q(r,d))_{\ell^n}$ and $\xi_{\rmd}^*:\Br'(\Pic^d({\C}))_{\ell^n} \ra \Br'(\Sym^d({\C}))_{\ell^n}$
are isomorphisms for any $n>0$ and for any $d \ge 3$. Here the subscripts $\ell^n$ denote the $\ell^n$-torsion sub-modules. 
\end{theorem}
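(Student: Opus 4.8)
The plan is to reduce the whole statement, via the Kummer sequence, to a computation of low-degree \'etale cohomology with $\mu_{\ell^n}$-coefficients together with the relevant N\'eron--Severi groups, and then to carry out that computation by combining projective-bundle formulas with the homological stability of symmetric powers. Concretely: for any smooth projective $\rmX$ over the separably closed field $k$, the Kummer sequence $0 \to \mu_{\ell^n} \to \G_m \xrightarrow{\ell^n} \G_m \to 0$ gives a short exact sequence $0 \to \Pic(\rmX)/\ell^n \to \H^2_{et}(\rmX, \mu_{\ell^n}) \to \Br'(\rmX)_{\ell^n} \to 0$, and since $\Pic^0(\rmX)$ is an abelian variety it is $\ell^n$-divisible over $k$, so $\Pic(\rmX)/\ell^n = \NS(\rmX)/\ell^n$. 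Hence a morphism $f \colon \rmX \to \rmY$ of such varieties induces an isomorphism on $\Br'(-)_{\ell^n}$ as soon as $f^*$ is an isomorphism on $\H^1_{et}(-, \mu_{\ell^n})$, is injective on $\H^2_{et}(-, \mu_{\ell^n})$, and induces an isomorphism on the cokernels of $\NS/\ell^n \hookrightarrow \H^2_{et}(-, \mu_{\ell^n})$; I would isolate this as a one-line diagram-chase lemma. So it suffices to control $\H^1_{et}$ and $\H^2_{et}$ with $\mu_{\ell^n}$-coefficients, and $\NS$, for the three families $\Sym^\rmd(\C)$, $\Pic^\rmd(\C)$, $\Q(r,\rmd)$, and the corresponding maps $\xi_\rmd$, $\phi_\rmd$.

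For $\H^1$: the composite $\C^{\rmd} \twoheadrightarrow \Sym^\rmd(\C) \xrightarrow{\xi_\rmd} \Pic^\rmd(\C)$ is a translate of a sum of Abel--Jacobi maps, and from this one checks that $\xi_\rmd^* \colon \Pic^0(\Pic^\rmd(\C)) \to \Pic^0(\Sym^\rmd(\C))$ is an isomorphism --- the target embeds $S_\rmd$-equivariantly into $\Pic^0(\C^\rmd)=\widehat{\J}^\rmd$ as the diagonal $\widehat{\J}$, onto which $\xi_\rmd^*$ is an isomorphism --- hence an isomorphism on $\ell^n$-torsion, i.e. on $\H^1_{et}(-,\mu_{\ell^n})$. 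Since the fibres of $\phi_\rmd$ are rationally connected, $\Q(r,\rmd)$ has the same Albanese, hence (over $k$) the same $\Pic^0$, as $\Sym^\rmd(\C)$, which settles the $\H^1$-statement for $\phi_\rmd$ too.

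The heart is the $\H^2$--$\NS$ statement. For $\rmd \geq 2g-1$ the map $\xi_\rmd$ is the projectivization of the pushforward of a Poincar\'e bundle (which exists by the standing rational-point hypothesis), hence a Zariski-locally trivial $\mathbb{P}^{\rmd-g}$-bundle over $\Pic^\rmd(\C)$; the projective-bundle formula then gives $\H^i_{et}(\Sym^\rmd(\C),\mu_{\ell^n}) \cong \bigoplus_j \H^{i-2j}_{et}(\Pic^\rmd(\C),\mu_{\ell^n}(-j))$ and $\NS(\Sym^\rmd(\C)) = \xi_\rmd^*\NS(\Pic^\rmd(\C)) \oplus \Z\cdot[\mathcal{O}(1)]$, which is exactly what is needed, the extra $\Z/\ell^n$ in $\H^2$ being absorbed by the divisor class $[\mathcal{O}(1)]$. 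For $3 \leq \rmd < 2g-1$ I would invoke homological stability: the closed embedding $a\colon \Sym^{\rmd-1}(\C) \hookrightarrow \Sym^\rmd(\C)$, $D \mapsto D + p$, together with the Gysin sequence relating it to its open complement $\Sym^\rmd(\C \smallsetminus \{p\})$ (a smooth affine $\rmd$-fold), shows by induction on $\rmd$ that $a^*$ is an isomorphism on $\H^i_{et}(-,\mu_{\ell^n})$ for $i$ in a range growing with $\rmd$; for $\rmd \geq 3$ this range contains $i = 1, 2$, so $\H^{\leq 2}_{et}(\Sym^\rmd(\C),\mu_{\ell^n})$ and $\NS(\Sym^\rmd(\C))$ are independent of $\rmd$ and coincide with the values found for $\rmd \gg 0$, after translating $\Pic^N(\C) \cong \Pic^\rmd(\C)$; one then checks that the induced comparison is indeed $\xi_\rmd^*$. (This stability range is what forces $\rmd \geq 3$.) Finally, $\phi_\rmd\colon \Q(r,\rmd) \to \Sym^\rmd(\C)$ restricts over the locus of reduced divisors to an iterated $\mathbb{P}^{r-1}$-bundle, all of its fibres are rationally connected with vanishing $\H^1$, and $R^2\phi_{\rmd*}\mu_{\ell^n}$ is supported in codimension $\geq 2$; a Leray spectral sequence argument (or purity on the smooth locus) then makes $\phi_\rmd^*$ an isomorphism on $\H^1$ and injective on $\H^2$ with cokernel a single $\Z/\ell^n$ spanned by a divisor class coming from the relative hyperplane classes, matching the N\'eron--Severi computation.

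I expect the main obstacle to be the homological-stability input: showing $a^* \colon \H^{\leq 2}_{et}(\Sym^\rmd(\C),\mu_{\ell^n}) \to \H^{\leq 2}_{et}(\Sym^{\rmd-1}(\C),\mu_{\ell^n})$ is an isomorphism for \emph{all} primes $\ell \neq \mathrm{char}(k)$ --- in particular small $\ell$ dividing $\rmd!$, where one cannot simply transfer along the finite cover $\C^\rmd \to \Sym^\rmd(\C)$ --- which requires a careful analysis of the \'etale cohomology of the affine symmetric powers $\Sym^\rmd(\C \smallsetminus \{p\})$, for instance via the stratification of $\Sym^\rmd(\C)$ by multiplicity at $p$, or via a motivic description of the relative motive of $\Sym^\rmd(\C)$ over $\Pic^\rmd(\C)$; this is presumably where the ``motivic and \'etale cohomology techniques'' of the abstract enter. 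A secondary technical point is pinning down $\NS(\Sym^\rmd(\C))$ and the divisor class producing the extra $\Z/\ell^n$ in $\H^2$, together with its compatibility under $\xi_\rmd$ and $\phi_\rmd$, and the analogous bookkeeping for the possibly reducible Quot scheme $\Q(r,\rmd)$.
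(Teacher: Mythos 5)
Your overall skeleton for the map $\xi_{\rmd}^*$ is the same as the paper's: reduce via the Kummer sequence to $0 \ra \bPic(\rmX)/\ell^n \ra \H^2_{et}(\rmX,\mu_{\ell^n}) \ra \Br'(\rmX)_{\ell^n} \ra 0$, use Gabber's theorem (or the projective bundle formula) for $d \ge 2g-1$ where $\Sym^d(\C) = \p(E_d) \ra \Pic^d(\C)$, and descend to all $d\ge 3$ by stabilization along $\Sym^d(\C)\hookrightarrow \Sym^{d+1}(\C)$ using the affineness of the complement $\Sym^{d+1}(\C-\{\rmp_0\})$ (Artin vanishing plus Poincar\'e duality for $\H^*_c$; this is the paper's Proposition \ref{weak.l}, and it already works for all $\ell\ne p$ with no transfer along $\C^d\ra\Sym^d(\C)$, so the "small primes dividing $d!$" issue you flag as the main obstacle is not actually one). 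The genuine gap is elsewhere, in the step you relegate to a "secondary technical point": you assert that stability of $\H^{\le 2}_{et}(-,\mu_{\ell^n})$ implies stability of $\NS$ (equivalently, that $\bPic(\Sym^{d+1}(\C))/\ell^n \ra \bPic(\Sym^{d}(\C))/\ell^n$ is surjective). Over $\Cl$ this follows because weak Lefschetz respects Hodge structures and $\NS = \H^{1,1}\cap \H^2(\Z)$, but in characteristic $p$ there is no such identification of $\NS/\ell^n$ inside $\H^2_{et}(\mu_{\ell^n})$, and one cannot invoke Grothendieck--Lefschetz for $\Pic$ without knowing ampleness of the divisor (which the paper only establishes in characteristic $0$) and without a stronger dimension bound. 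This surjectivity is exactly the input the paper imports from Collino's computation of the Chow ring of symmetric powers of curves over an arbitrary algebraically closed field; injectivity then comes for free from the commutative diagram. Without an input of this kind your argument does not close in positive characteristic, which is the whole point of the theorem.

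For $\phi_{\rmd}^*$ you take a genuinely different route (Leray spectral sequence for $\phi_{\rmd}$), but as written it contains a false claim: $R^2\phi_{\rmd *}\mu_{\ell^n}$ is \emph{not} supported in codimension $\ge 2$, since over the open locus of reduced divisors the fibre is $(\p^{r-1})^{\times d}$, whose $\H^2$ is nonzero for $r\ge 2$; the correct statement is that the monodromy invariants of this local system contribute exactly one $\Z/\ell^n$ (the symmetrized relative hyperplane class), which is consistent with your intended conclusion but not with your stated reason. You would also still need the matching computation $\bPic(\Q(r,d))/\ell^n \cong \bPic(\Sym^d(\C))/\ell^n \oplus \Z/\ell^n$, compatibly with the extra class in $\H^2_{et}$, and the Leray argument does not give you the Picard side. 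The paper instead uses the $\G_m^r$-action on $\Q(r,d)$ and the resulting Bialynicki--Birula decomposition (following Bifet and del Bano): the localization sequences in both $\CH^1$ and $\H^2_{et}$ split, the only contributing stratum beyond the open cell $\Sym^d(\C)$ is the codimension-one cell over $\Sym^{d-1}(\C)\times\C$, and the two extra summands $\Z$ and $\Z/\ell^n$ are generated by compatible Thom/Gysin classes; a snake-lemma argument then finishes. That approach buys you the Chow-group and \'etale computations simultaneously and in a visibly compatible way, which is precisely what the Kummer-sequence comparison requires. If you want to keep the Leray route you must repair the support claim and supply the Picard computation separately.
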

 Over the field of complex numbers, it was shown in \cite{BDH} that the maps $\phi_{\rmd}^*$ and $\xi_{\rmd}^*$ are isomorphisms on
the  cohomological Brauer groups, for all $d \ge 2$.  Our present proof
holds in arbitrary characteristics, since we make use of \'etale  cohomology methods to approach the cohomological Brauer groups.
However, the restriction that $d \ge 3$ is needed in our proof, so that we can invoke the weak Lefschetz Theorem: see Propositions
~\ref{weak.l} and ~\ref{LHT}(ii).
\vskip .2cm
The extension to positive characteristics is still far from automatic. One may see from our proof that one needs
to have a calculation of the Picard groups of symmetric powers of projective smooth curves yielding results similar
to that of MacDonald (see \cite{Mac}) for singular cohomology: fortunately this was already worked out by Collino (see \cite{Coll}) for all Chow-groups. 
This plays
a key role in the proof of the isomorphism for $\xi_{\rmd}^*$. In order to deduce the corresponding isomorphism for
$\phi_{\rmd}^*$ one also needs to have a calculation of the Picard groups of the Quot-schemes we are considering. Fortunately for us,
this had also been worked out in the context of their Chow groups by del Bano: see \cite{dB}. The referee also pointed out that the homotopy invariance 
property of the prime to-the-characteristic torsion of the Brauer group is classical and has origins in \cite[Proposition 7.7]{Aus} and appears also 
in \cite{Ga}, \cite[Proposition 8.6, Remark 5.2]{Ro}. 
\vskip .2cm
At the same time we also obtain similar results for Prym varieties, which is the content of our next main result. 
\vskip .2cm
Let $k$ denote a separably closed field of characteristic different from $2$ and let $f:\tilde{{\C}}\rightarrow {\C}$ denote a degree two Galois covering, between smooth projective curves over $k$. Denote  by $\sigma$ the involution acting on $\tilde{{\C}}$. Then we can write ${\C}= \frac{\tilde{{\C}}}{<\sigma>}$. 
Let $\tilde{g}:= \rm{ genus}(\tilde {\C})$ and $g:= \rm{ genus}({\C})$. Then $f$ induces a 
morphism, $f^*: \J({\C}) \rightarrow \J(\tilde{{\C}})$,
on the Jacobian varieties of $\tilde{{\C}}$ and ${\C}$, given by $l\mapsto f^*(l)$, the pullback of a degree zero line bundle $l$ on ${\C}$. When $f$ is ramified, the morphism $f^*$ is injective (see \cite[Corollary 11.4.4]{BL} or \cite[\S 3]{Mum}.).
When $f$ is unramified, the image is a quotient of $\J({\C})$ by a $2$-torsion line bundle $l$ on ${\C}$ which defines the covering $f$.
In either case the image is an abelian subvariety of the Jacobian $\J(\tilde{{\C}})$.
The abelian variety
$$
\P:= \frac{\J(\tilde{{\C}})}{Image(f^*)}
$$
is called the \textit{Prym variety} associated to the degree two covering $\tilde{{\C}}\rightarrow {\C}$. By identifying $\P$ with the kernel of the endomorphism $\sigma +id$ on $\J(\tilde \C)$ (see \cite[\S 3]{Mum}), we can write 
$$
\J(\tilde{{\C}})= Image(f^*)+ \P
$$ 
and there is a finite isogeny
$$
\J({\C})\times \P \rightarrow \J(\tilde{{\C}}).
$$
Let  $r$ denote a fixed integer $\ge 2\tilde g-1$. We will assume henceforth that
$f$ is ramified and that $z$ is a ramification point on $\C$ which is also a $k$-rational point.
Consider the Abel-Jacobi map, from \S \ref{Poincare}:
$\Phi:\Sym^r(\tilde{{\C}})=\mathbb{P}(E_r) \rightarrow \J (\tilde{{\C}})$. Denote the inverse image 
$\rmW^r_P:=\Phi^{-1}(\P)$
and the subvarieties
$\rmW^s_{\P}:= \rmW^r_{\P} \cap (\Sym^s(\tilde{{\C}})+ (r-s).z)$
for $1\leq s\leq r$. 

\begin{theorem}
 \label{main.thm.2}
 (i) The induced pullback map
$$
\Br'(\P) \rightarrow \Br'(W^r_{\P})
$$
is an isomorphism.
\vskip .2cm
(ii) If $s$ is the largest integer $\le r$, so that $\rmW^{s-1}_P \ne \rmW^r_P$, and $dim (\rmW^s_{\P}) \ge 4$, then $\Br(\rmW^r_{\P})_{\ell^n} =\Br(\rmW^s_P)_{\ell^n} \ra \Br(\rmW^{s-1}_P)_{\ell^n}$ is surjective
as long as $\ell$ is a prime different from the characteristic $p$ of the base field. Moreover, this map will be an isomorphism
if $\rmW^{s-1}_P$ is smooth and the induced map $\Pic(\rmW^s_P)/\ell^n \ra \Pic(\rmW^{s-1}_P)/\ell^n $ is also an isomorphism; the last condition is
satisfied if the base field is the complex numbers, $dim(\rmW^s_{\P})\geq 4$ and ${\rm W}^{s-1}_{\P}$ is smooth.
\end{theorem}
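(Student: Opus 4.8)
The two assertions are proved by different mechanisms, so I would treat them in turn.

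For part (i), the point is that $\rmW^r_{\P}$ is a projective bundle over $\P$. Since $r\ge 2\tilde g-1$, every degree-$r$ line bundle $L$ on $\tilde{\C}$ has $\H^1(\tilde{\C},L)=0$, so $h^0(L)=r-\tilde g+1$ is constant; hence the Abel--Jacobi map $\Phi:\Sym^r(\tilde{\C})=\mathbb{P}(E_r)\to\J(\tilde{\C})$ of \S\ref{Poincare} is the projection of a $\mathbb{P}^{r-\tilde g}$-bundle, $E_r$ being the direct image of a degree-$r$ Poincar\'e bundle. Restricting over the abelian subvariety $\P\subseteq\J(\tilde{\C})$, the induced map $\Phi:\rmW^r_{\P}=\Phi^{-1}(\P)\to\P$ is again a projective bundle, namely $\mathbb{P}(E_r|_{\P})$; in particular $\rmW^r_{\P}$ is smooth, connected and projective. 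One then invokes projective-bundle invariance of the cohomological Brauer group (the ``homotopy invariance'' recalled in the Introduction): for $\pi:\mathbb{P}(E)\to Y$ with $Y$ connected over the separably closed $k$, the Leray spectral sequence for $\G_m$ has $R^0\pi_*\G_m=\G_m$, $R^1\pi_*\G_m=\Z$ (generated by the tautological $\O(1)$), and $R^q\pi_*\G_m=0$ for $q\ge 2$; since $\H^1_{et}(\P,\Z)=\mathrm{Hom}_{\mathrm{cont}}(\pi_1^{et}(\P),\Z)=0$ and $d_2$ annihilates $E_2^{0,1}=\H^0_{et}(\P,\Z)$ (because $\O_{\mathbb{P}(E_r|_{\P})}(1)$ is globally defined), one obtains $\pi^*:\H^2_{et}(\P,\G_m)\xrightarrow{\ \sim\ }\H^2_{et}(\rmW^r_{\P},\G_m)$, hence $\Br'(\P)\xrightarrow{\ \sim\ }\Br'(\rmW^r_{\P})$.

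For part (ii), first note that the $\rmW^j_{\P}$ form an increasing chain and that, $s$ being chosen largest with $\rmW^{s-1}_{\P}\ne\rmW^r_{\P}$, one has $\rmW^s_{\P}=\rmW^{s+1}_{\P}=\cdots=\rmW^r_{\P}$; thus $\rmW^s_{\P}$ is the smooth projective variety of part (i), and I set $d_0:=\dim\rmW^s_{\P}$. Under the isomorphism $\Sym^s(\tilde{\C})+(r-s)z\cong\Sym^s(\tilde{\C})$ the subvariety $\Sym^{s-1}(\tilde{\C})+(r-s+1)z$ corresponds to $\Sym^{s-1}(\tilde{\C})+z$, a classically ample effective divisor on $\Sym^s(\tilde{\C})$ whose complement is the affine variety $\Sym^s(\tilde{\C}\setminus\{z\})$. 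Hence $\rmW^{s-1}_{\P}=\rmW^s_{\P}\cap(\Sym^{s-1}(\tilde{\C})+z)$; as $\rmW^{s-1}_{\P}\ne\rmW^s_{\P}$ and $\rmW^s_{\P}$ is smooth and irreducible, $\rmW^{s-1}_{\P}$ is a pure codimension-one ample divisor in $\rmW^s_{\P}$, and $U:=\rmW^s_{\P}\setminus\rmW^{s-1}_{\P}$ is a smooth affine variety of dimension $d_0$ (it is closed in $\Sym^s(\tilde{\C}\setminus\{z\})$). I would then run the weak Lefschetz argument (Propositions~\ref{weak.l} and~\ref{LHT}(ii)): Artin's affine vanishing and Poincar\'e duality on $U$ give $\H^k_c(U,\mu_{\ell^n})=0$ for $k<d_0$, so the excision sequence $\cdots\to\H^k_c(U,\mu_{\ell^n})\to\H^k_{et}(\rmW^s_{\P},\mu_{\ell^n})\to\H^k_{et}(\rmW^{s-1}_{\P},\mu_{\ell^n})\to\H^{k+1}_c(U,\mu_{\ell^n})\to\cdots$ forces restriction to be an isomorphism on $\H^k_{et}(-,\mu_{\ell^n})$ for $k\le d_0-2$; with $d_0\ge 4$ this applies to $k=2$.

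Comparing the Kummer sequences $0\to\Pic(\rmX)/\ell^n\to\H^2_{et}(\rmX,\mu_{\ell^n})\to\Br'(\rmX)_{\ell^n}\to 0$ for $\rmX=\rmW^s_{\P}$ and $\rmX=\rmW^{s-1}_{\P}$, the middle map being an isomorphism, a diagram chase yields that $\Br'(\rmW^s_{\P})_{\ell^n}\to\Br'(\rmW^{s-1}_{\P})_{\ell^n}$ is surjective, and the snake lemma yields that it is an isomorphism precisely when $\Pic(\rmW^s_{\P})/\ell^n\to\Pic(\rmW^{s-1}_{\P})/\ell^n$ is one; over $\comx$, with $\rmW^{s-1}_{\P}$ smooth and $d_0\ge 4$, the latter follows from the Grothendieck--Lefschetz theorem for Picard groups, which yields $\Pic(\rmW^s_{\P})\xrightarrow{\ \sim\ }\Pic(\rmW^{s-1}_{\P})$. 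Apart from the formal spectral- and Kummer-sequence bookkeeping, the substantive steps are: (a) verifying that $\rmW^{s-1}_{\P}$ is genuinely a pure codimension-one ample divisor in the smooth projective $\rmW^s_{\P}=\rmW^r_{\P}$, which rests on the classical ampleness of $\Sym^{s-1}(\tilde{\C})+z$ in $\Sym^s(\tilde{\C})$ together with the properness of the intersection — and hence on the irreducibility of $\rmW^r_{\P}$ established in part (i); and (b) controlling $\Pic/\ell^n$ under restriction to this divisor. I expect (b) to be the real obstacle to generality: it is assumed outright in the main statement, and is verified over $\comx$ only, where the needed input — Grothendieck--Lefschetz in dimension $\ge 4$ — is readily available in the precise form one wants; this is exactly what confines the last clause to the complex case.
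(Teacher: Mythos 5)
Your argument is essentially the paper's own: part (i) rests on $\rmW^r_{\P}\to\P$ being the projectivization of a vector bundle so that the Brauer obstruction class vanishes (the paper simply cites Gabber's theorem where you unwind it via the Leray spectral sequence for $\G_m$), and part (ii) runs the same weak Lefschetz argument through the smooth affine complement $\rmW^s_{\P}\setminus\rmW^{s-1}_{\P}\subseteq\Sym^s(\tilde{\C}\setminus\{z\})$, compares Kummer sequences, and invokes Grothendieck--Lefschetz over $\comx$ for the Picard condition. One small remark: your emphasis on $\rmW^{s-1}_{\P}$ being an \emph{ample} divisor is not needed for the \'etale-cohomology step (and the paper only establishes ampleness of $\Sym^{s-1}(\tilde{\C})+z$ in characteristic $0$); affineness and smoothness of the complement, which you also verify, is exactly what the argument uses, with ampleness entering only in the final Grothendieck--Lefschetz application over $\comx$.
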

\begin{remark} In the first statement there is no need to consider the $\ell^n$-torsion part of the Brauer group for $\ell$ different from
 the characteristic of the base field. This is discussed in the Remark following Gabber's Theorem: see \cite[Theorem 2 and Remark, p.193]{Ga}.
\end{remark}

\vskip .2cm \noindent
{\bf Terminology}. 
Since we need to consider both the Picard scheme and the Picard group, we will use
$\Pic$ ($\bPic$) to denote the Picard scheme (the Picard group, \res). $\H^*_{et}$ will always denote cohomology 
computed on the \'etale site.
\vskip .2cm \noindent
{\bf Acknowledgment}. The second author thanks the Institute for Mathematical Sciences, Chennai, for supporting his visit during the
summer of 2017 and for providing a pleasant working environment during his visit. 
Both the authors thank the referee for undertaking a careful reading of the paper, for 
pointing out references to the homotopy invariance of Brauer groups and for various other 
remarks that have helped the authors improve the exposition as well as some of the results.

\section{Review of Brauer groups of smooth projective varieties over a field $k$}
In this section, we review Brauer groups both from the point of view of complex geometry as well as from the point of view of \'etale
cohomology applicable in positive characteristics. We begin by considering the case where $k = \Cl$.

\subsection{Assume $k=\mathbb{C}$}

Let $\rmX$ denote a smooth projective variety over $\mathbb{C}$. Denote by $\cO_{\rmX}$ and $\cO_{\rmX}^*$, the sheaf of holomorphic functions on ${\rmX}$, and the multiplicative sheaf of nowhere vanishing holomorphic functions on ${\rmX}$.
Consider the exponential sequence:
\begin{equation}
 \label{exp.seq.0}
0\rightarrow \mathbb{Z} \rightarrow \cO_{\rmX} \sta{exp}{\rightarrow} \cO_{\rmX}^* \rightarrow 0.
\end{equation}
The associated long exact sequence of cohomology groups (computed on ${\rmX}({\mathbb C})$ with the complex topology) is the following:
 \begin{equation}
\label{exp.seq.1}
\rightarrow \H^1({\rmX},\cO_{\rmX}^*) \rightarrow \H^2({\rmX},\mathbb{Z}) \rightarrow \H^2({\rmX},\cO_{\rmX})\rightarrow \H^2({\rmX},\cO_{\rmX}^*) \rightarrow \H^3({\rmX},\mathbb{Z})\rightarrow.
\end{equation}
Under the connecting map, the image of $\bPic({\rmX})=H^1({\rmX},\cO_{\rmX}^*)$ inside $H^2({\rmX},\mathbb{Z})$ is the N\'eron-Severi group 
$$
\NS({\rmX}):= \H^{1,1}({\rmX}) \cap \H^2({\rmX},\mathbb{Z}).
$$
\begin{definition} (Complex analytic version)
 \label{coh.Br.grp.0}
The \textit{cohomological Brauer group} $Br'({\rmX})$ is the torsion subgroup of the cohomology group $H^2({\rmX},\cO_{\rmX}^*)$. In other words,
\vskip .2cm
$\Br'({\rmX})\,=\,\H^2({\rmX},\cO_{\rmX}^*)_{tors}$.
\end{definition}

The Brauer group can also be expressed via the  cohomology sequence in ~\eqref{exp.seq.1} as follows.
\begin{proposition}\label{Schroer}
There is a natural short exact sequence:
$$
0\rightarrow \frac{\H^2({\rmX},\mathbb{Z})}{\NS({\rmX})}\otimes \mathbb{Q}/\Z\rightarrow \Br'({\rmX}) \rightarrow \H^3(X,\mathbb{Z})_{tors}\rightarrow 0.
$$
\end{proposition}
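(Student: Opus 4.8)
The plan is to read off the asserted short exact sequence directly from the long exact cohomology sequence ~\eqref{exp.seq.1} of the exponential sequence, while keeping careful track of which subgroups are torsion. First I would rewrite the relevant stretch of ~\eqref{exp.seq.1}, replacing $\H^1(\rmX,\cO_{\rmX}^*)$ by its image $\NS(\rmX)\subseteq \H^2(\rmX,\Z)$ (as recalled just above the proposition) and $\H^3(\rmX,\Z)$ by the subgroup $D:=\ker\big(\H^3(\rmX,\Z)\to\H^3(\rmX,\cO_{\rmX})\big)$, obtaining the four-term exact sequence
$$0\to \frac{\H^2(\rmX,\Z)}{\NS(\rmX)}\xrightarrow{\iota}\H^2(\rmX,\cO_{\rmX})\to \H^2(\rmX,\cO_{\rmX}^*)\to D\to 0.$$
Writing $A:=\H^2(\rmX,\Z)/\NS(\rmX)$ and $B:=\H^2(\rmX,\cO_{\rmX})$, I would then split this into the two short exact sequences $0\to A\xrightarrow{\iota}B\to B/\iota(A)\to 0$ and $0\to B/\iota(A)\to \H^2(\rmX,\cO_{\rmX}^*)\to D\to 0$, where in the second one $B/\iota(A)$ is identified with its image in $\H^2(\rmX,\cO_{\rmX}^*)$.

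Next I would record two elementary structural facts. Since $B=\H^2(\rmX,\cO_{\rmX})$ is a finite-dimensional $\comx$-vector space, it is divisible and torsion-free; feeding $0\to A\to B\to B/\iota(A)\to 0$ into $-\otimes\Z/n$ therefore gives $(B/\iota(A))\otimes\Z/n=0$ and a natural isomorphism $(B/\iota(A))[n]\cong A/n$, so that $B/\iota(A)$ is divisible and, taking the colimit over $n$, $(B/\iota(A))_{tors}\cong \varinjlim_n A/n = A\otimes\mathbb{Q}/\Z$. (In particular $A$ is torsion-free, as it embeds in $B$; equivalently $\H^2(\rmX,\Z)_{tors}\subseteq\NS(\rmX)$, because $\H^2(\rmX,\Z)\to\H^2(\rmX,\cO_{\rmX})$ factors through $\H^2(\rmX,\comx)$ and hence annihilates torsion.) For the same reason in degree $3$, the map $\H^3(\rmX,\Z)\to\H^3(\rmX,\cO_{\rmX})$ kills $\H^3(\rmX,\Z)_{tors}$, whence $D_{tors}=\H^3(\rmX,\Z)_{tors}$.

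Finally I would apply the left-exact functor $(-)_{tors}$ to $0\to B/\iota(A)\to \H^2(\rmX,\cO_{\rmX}^*)\to D\to 0$. This yields an exact sequence $0\to (B/\iota(A))_{tors}\to \Br'(\rmX)\to D_{tors}$, using $\Br'(\rmX)=\H^2(\rmX,\cO_{\rmX}^*)_{tors}$ by definition. Surjectivity of the last map onto $D_{tors}$ is where the divisibility of $B/\iota(A)$ enters: given $d\in D$ with $nd=0$, choose any lift $c\in\H^2(\rmX,\cO_{\rmX}^*)$; then $nc$ is the image of some $\beta\in B/\iota(A)$, so writing $\beta=n\beta'$ and letting $\bar\beta'$ be the image of $\beta'$ in $\H^2(\rmX,\cO_{\rmX}^*)$, the class $c-\bar\beta'$ is $n$-torsion and still maps to $d$. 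Combining this with the two identifications $(B/\iota(A))_{tors}\cong \frac{\H^2(\rmX,\Z)}{\NS(\rmX)}\otimes\mathbb{Q}/\Z$ and $D_{tors}=\H^3(\rmX,\Z)_{tors}$ gives precisely
$$0\to \frac{\H^2(\rmX,\Z)}{\NS(\rmX)}\otimes\mathbb{Q}/\Z\to \Br'(\rmX)\to \H^3(\rmX,\Z)_{tors}\to 0,$$
and naturality of this sequence in $\rmX$ is inherited from naturality of the exponential sequence ~\eqref{exp.seq.0}, hence of ~\eqref{exp.seq.1}.

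This is essentially a formal unwinding of the exponential sequence, so I do not anticipate a genuine obstacle; the only points deserving a little care are the identification $(B/\iota(A))_{tors}\cong A\otimes\mathbb{Q}/\Z$ — which is why I would carry out the $-\otimes\Z/n$ computation explicitly rather than merely assert it — together with the repeated use of the elementary but crucial fact that the comparison maps $\H^i(\rmX,\Z)\to\H^i(\rmX,\cO_{\rmX})$ factor through complex coefficients and therefore vanish on torsion classes.
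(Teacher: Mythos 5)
Your argument is correct and complete. The paper itself offers no proof beyond the citation to Schr\"oer \cite[Proposition 1.1]{Sc}, and your derivation --- splitting the exponential long exact sequence at the divisible, torsion-free $\comx$-vector space $\H^2({\rmX},\cO_{\rmX})$, identifying the torsion of the quotient with $\frac{\H^2({\rmX},\mathbb{Z})}{\NS({\rmX})}\otimes \mathbb{Q}/\Z$ via the $-\otimes\Z/n$ computation, and then using divisibility to get surjectivity onto $\H^3({\rmX},\mathbb{Z})_{tors}$ --- is precisely the standard argument carried out in that reference.
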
 
\begin{proof}
See \cite[p.878, Proposition 1.1]{Sc}.
\end{proof}

\subsection{Assume $k$ is any separably closed field} 

Let ${\rmX}$ denote a smooth projective variety over $k$. Let ${\mathbb G}_m$ denote the \'etale sheaf of units in the 
structure sheaf and for each prime $\ell$, let $\mu_{\ell^n}(1)$ denote the sub-sheaf of ${\mathbb G}_m$ consisting of 
the $\ell^n$-roots of unity. Then the analogue of the exponential sequence ~\eqref{exp.seq.0} is the Kummer-sequence:
\begin{equation}
 \label{Kummer.seq.0}
1 \ra \mu_{\ell^n}(1) \ra {\mathbb G}_m {\overset {\ell^n} \ra} {\mathbb G}_m \ra 1
\end{equation}
which holds on the \'etale site ${\rmX}_{et}$ of ${\rmX}$, whenever $\ell$ is invertible in $k$. (See \cite[section 3]{Gr}.)
It is observed (see \cite[p. 66]{Mi}, for example)
that the same sequence is exact on the fppf site (i.e. the flat site) of ${\rmX}$ without the requirement that $\ell$ be invertible in $k$. 
However, since we will need affine spaces to be acyclic in various places in our proofs, we will henceforth restrict to the \'etale site
and assume $\ell$ is invertible in $k$. (For the same reason, we are also forced to restrict to separably closed fields.)
Then we obtain the corresponding long-exact sequence:
\begin{equation}
 \label{Kummer.seq.1}
\rightarrow \H^1_{et}({\rmX},{\mathbb G}_m) {\overset {\ell^n} \ra } \H^1_{et}({\rmX}, {\mathbb G}_m) \ra  \H^2_{et}({\rmX},\mu_{\ell^n}(1)) \rightarrow \H^2_{et}({\rmX}, {\mathbb G}_m) \rightarrow H^2_{et}({\rmX}, {\mathbb G}_m) \rightarrow \cdots 
\end{equation}
which holds on the \'etale site when $\ell$ is invertible in $k$. (Recall $\bPic({\rmX}) = \H^1_{et}({\rmX}, {\mathbb G}_m)$.)
\begin{definition} (Algebraic version)
 \label{coh.Br.grp.1}
The \textit{cohomological Brauer group} $\Br'({\rmX})$ is the torsion subgroup of the cohomology group $\H^2_{et}({\rmX}, {\mathbb G}_m)$. In other words,
 $\Br'({\rmX})\,=\,\H^2_{et}({\rmX}, {\mathbb G}_m)_{tors}$.
\end{definition}
Then one also obtains the short-exact sequences:
\begin{equation}
 \label{Kummer.seq.2}
0 \ra \bPic({\rmX})/\ell^n\cong \NS({\rmX})/\ell^n \ra \H^2_{et}({\rmX}, \mu_{\ell^n}(1)) \ra \Br'({\rmX})_{\ell^n} \ra 0 \,  \mbox{ and}
\end{equation}
\begin{equation}
\label{Kummer.seq.3}
0 \ra \NS({\rmX}) \otimes {\mathbb Z}_{\ell} \ra \H^2_{et}({\rmX}, {\mathbb Z}_{\ell}(1)) \ra T_{\ell}({\Br'}({\rmX})) \ra 0 \,
\end{equation}

\vskip .1cm \noindent
where 
\vskip .1cm
$\NS({\rmX}) = \bPic({\rmX})/\bPic^o({\rmX}) \cong \H^1_{et}({\rmX}, {\mathbb G}_m)/\H^1_{et}({\rmX}, {\mathbb G}_m)^o,$
\vskip .1cm
$\bPic({\rmX})/\ell^n = coker (\bPic({\rmX}) {\overset {\ell^n} \ra} \bPic({\rmX})), \, \NS({\rmX})/\ell^n = coker(\NS({\rmX}) {\overset {\ell^n} \ra} \NS({\rmX})),$
\vskip .1cm
$\Br'({\rmX})_{\ell^n}$ = the $\ell^n$-torsion part of $\Br'({\rmX})$ and $T_{\ell}({\Br'}({\rmX})) = \limn \Br'({\rmX})_{\ell^n}$.
\vskip .2cm
The main tool we use to obtain results on the cohomological Brauer group will be the short-exact sequence in Proposition ~\ref{Schroer} in
the complex analytic framework and the short-exact sequence in ~\eqref{Kummer.seq.2} in the algebraic framework.


\section{Symmetric product of curves}
\label{tech.res}

Suppose ${\C}$ is a smooth geometrically connected projective curve over a field $k$, of genus $g$.

The $\rmd$-self product of the curve ${\C}$ is denoted by ${\C}^{\times \rmd}$ and the $\rmd$-symmetric product (or equivalently the $\rmd$-fold symmetric power) of ${\C}$ is denoted by $\rm{Sym}^d({\C})$.
Then we have the relation:
$$
\rm{Sym}^d({\C})\,=\,\frac{{\C}^{\times d}}{\Sigma_d}.
$$
Here $\Sigma_d$ denotes the symmetric group on $\rmd$-letters.

The variety $\rm{Sym}^d({\C})$ is a smooth geometrically irreducible projective variety of dimension $\rmd$. 

The Jacobian variety of ${\C}$ is denoted by $\J({\C})$. This variety is  the Picard variety $\rm{Pic}^0({\C})$, parameterizing line bundles of degree zero on ${\C}$.
More generally we denote by $\rm{Pic}^d({\C})$, the Picard variety parameterizing line bundles of degree $\rmd$ on ${\C}$.

 \vskip .2cm
Recall that we have assumed that the curve $\C$ has a $k$-rational point ${\rm p}_0$. 
The given $k$-rational point $\rmp_0$ defines the closed immersion
\begin{equation}
\label{i}
i:\rm{Sym}^{d}({\C})\rightarrow \rm{Sym}^{d+1}({\C}),\,\,\,D\mapsto D+\p_0.
\end{equation}
\vskip .2cm \noindent
and the image is a smooth divisor on $\rm{Sym}^{d+1}({\C})$.
 
\subsection{Poincar\'e line bundle and the Symmetric product} (See \cite[Chapter 4]{ACGH} for the case the base field is $\Cl$, and 
\cite{Matt-2}, \cite{Scb} for the case the base field is any algebraically closed field.) Recall that the curve $\C$ is assumed to have a
 $k$-rational point.
 
\label{Poincare}

The Poincar\'e line bundle $\cL \rightarrow \, {\C} \times \Pic^r({\C})$ parametrizes 
line bundles of degree $r$, i.e. when restricted to ${\C}\times [l]$, it is the line bundle $l$ on ${\C}$, and satisfying a universal property.

Denote the projection $\pi: {\C}\times \Pic^r({\C}) \rightarrow \Pic^r({\C})$. The push-forward of $\cL$ is denoted by 
$$
E_r:=\pi_*\cL
$$
and its projectivization by 
$$
\Phi:\mathbb{P}(E_r)\rightarrow \Pic^r({\C}).
$$
When $r\geq 2g-1$, $E_r$ is a vector bundle and $\mathbb{P}(E_r)\rightarrow \Pic^r({\C})$ is a projective bundle. (We skip the verification of
these assertions for the case the base field is only separably closed, as it may be readily deduced from the case when the base field is algebraically closed.)

Then the variety $\rm{Sym}^r({\C})\simeq \mathbb{P}(E_r)$ and the morphism $\Phi$ is identified with the Abel-Jacobi map, considered in the previous subsection.
The fiber $\Phi^{-1}(\cL)$ is the complete linear system $|\cL|$ of a point $\cL\in \Pic^r({\C})$. 

\subsection{Weak Lefschetz for the Symmetric product}

As observed in ~\eqref{i}, the chosen $k$-rational point $\p_0$ defines the closed immersion
$i:\rm{Sym}^{d}({\C})\rightarrow \rm{Sym}^{d+1}({\C}),\,\,\,D\mapsto D+ \p_0.$
 Moreover, we have the following.
\begin{lemma}\label{amplediv}
When $k$ is the field of complex numbers, the image of $\rm{Sym}^{d}({\C})$ is an ample smooth divisor on $\rm{Sym}^{d+1}({\C})$. The same 
conclusion holds when $k$ is any field of characteristic $0$.
\end{lemma}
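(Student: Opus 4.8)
The plan is to exhibit $i(\Sym^d(\C))$ explicitly as the zero locus of a section of an ample line bundle on $\Sym^{d+1}(\C)$, and then invoke smoothness of $\Sym^d(\C)$ together with the fact that a smooth curve over a field of characteristic $0$ has all its symmetric powers smooth. First I would recall the standard divisor-class computation on symmetric powers (as in MacDonald \cite{Mac}, valid over any field since it is a statement about algebraic cycles): writing $x_{d+1} \in \Pic(\Sym^{d+1}(\C))$ for the class of the ``big diagonal-type'' divisor $\{D : \p_0 \in \m{Supp}(D)\} = i(\Sym^d(\C))$, and $\theta$ for the pullback under $\xi_{d+1}$ of the theta divisor (or rather a suitable polarization) on $\Pic^{d+1}(\C)$, one has the relation that a positive combination $a\,x_{d+1} + b\,\theta$ with $a, b > 0$, or simply $x_{d+1}$ itself when $d+1 \ge g$, is ample. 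More robustly, I would argue as follows: the Abel--Jacobi map $\xi_{d+1}: \Sym^{d+1}(\C) \to \Pic^{d+1}(\C)$ is, for $d+1$ large, a projective bundle (the $\mathbb{P}(E_r)$ description of \S\ref{Poincare}), and $x_{d+1}$ restricts on each fiber $\mathbb{P}^N = |L|$ to the hyperplane class (since requiring $\p_0 \in \m{Supp}(D)$ is a single linear condition on a linear system), hence is $\xi_{d+1}$-ample; twisting by the pullback of an ample class on $\Pic^{d+1}(\C)$ then yields an ample class on $\Sym^{d+1}(\C)$, and since on the fibers it already agrees with $x_{d+1}$, one concludes $x_{d+1}$ itself is ample for $d+1$ in the projective-bundle range, and for small $d$ one falls back on the explicit intersection-theoretic positivity criterion.

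Next I would address smoothness of the divisor. The image $i(\Sym^d(\C))$ is isomorphic to $\Sym^d(\C)$, which is a smooth projective variety of dimension $d$ since $\C$ is a smooth curve (here we do use characteristic $0$, or more precisely we use that $\Sigma_d$ acts on $\C^{\times d}$ with the quotient smooth — true in characteristic $0$ because the stabilizers are generated by pseudo-reflections and one can apply the Chevalley--Shephard--Todd criterion, or simply because over a curve the symmetric power is always smooth as it coincides with the Hilbert scheme of points on a smooth curve, cf. \eqref{key.isom.1}); and $i$ is a closed immersion with $i(\Sym^d(\C))$ of pure codimension $1$ in $\Sym^{d+1}(\C)$, so it is a smooth divisor. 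The statement that $i$ is a closed immersion whose image is a divisor is already recorded in \eqref{i} and the sentence following it, so I would just cite that.

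Finally, for the extension from $k = \Cl$ to an arbitrary field $k$ of characteristic $0$: ampleness, smoothness, and the property of being a divisor are all geometric and stable under field extension and descent, so one reduces to $k = \ov{k}$, then to a finitely generated subfield, then spreads out and specializes; alternatively, since the Picard group and intersection numbers of $\Sym^{d+1}(\C_{\ov k})$ are computed by the same MacDonald-type formulas independently of the field, the positivity criterion applies verbatim. I expect the main obstacle to be pinning down the precise ample divisor class and its positivity for the small values of $d$ (where the projective-bundle description is unavailable), which forces one to use the explicit numerical positivity criterion on $\Sym^{d+1}(\C)$ rather than the clean fibration argument; this is presumably also why the authors only claim this lemma over characteristic $0$ and why the hypothesis $d \ge 3$ appears downstream when weak Lefschetz is invoked. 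For the present lemma, however, no lower bound on $d$ beyond $d \ge 1$ is needed, since the class $x_{d+1}$ (suitably twisted) is ample in all cases.
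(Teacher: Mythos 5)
Your smoothness discussion and your reduction from an arbitrary field of characteristic $0$ to $k=\Cl$ are fine, and in fact the paper itself does no more for the second statement than observe that the divisor is defined over $k$ (because $\p_0$ is a $k$-rational point) and that ampleness may be checked after extension of scalars; for the first statement the paper simply cites \cite[p.310, Proof of (2.2) Proposition]{ACGH}. The genuine gap is in your ampleness argument. In the projective-bundle range you correctly note that the class $x_{d+1}$ of $i(\Sym^{d}(\C))=\{D:\p_0\in\mathrm{Supp}(D)\}$ restricts to the hyperplane class on the fibres of $\xi_{d+1}$, hence is $\xi_{d+1}$-relatively ample, and that $x_{d+1}+n\,\xi_{d+1}^{*}A$ is ample for $n\gg 0$; but the concluding step ``since on the fibers it already agrees with $x_{d+1}$, one concludes $x_{d+1}$ itself is ample'' is a non sequitur. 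Agreeing with an ample class on every fibre is precisely the content of relative ampleness and does not yield ampleness: on a projective bundle $\mathbb{P}(E)\to B$ the tautological class is always relatively ample, yet it is ample only when $E$ is an ample vector bundle. So the case you rely on is not actually established, and your fallback for small $d$ (``the explicit intersection-theoretic positivity criterion'') is never carried out --- Nakai--Moishezon on the $(d+1)$-dimensional variety $\Sym^{d+1}(\C)$ requires the positivity of $x_{d+1}^{\dim V}\cdot V$ for \emph{every} subvariety $V$, which does not follow formally from MacDonald's top self-intersection numbers.

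The gap is easy to close, and in a way that removes the case division on $d$ altogether: let $\pi:\C^{\times(d+1)}\to\Sym^{d+1}(\C)$ denote the finite surjective quotient map. The pullback $\pi^{*}(x_{d+1})$ is the class of $\sum_{j}\mathrm{pr}_{j}^{-1}(\p_0)$, i.e.\ $\sum_{j}\mathrm{pr}_{j}^{*}[\p_0]$, which is ample on the product because $[\p_0]$ is ample on the curve and an external sum of ample divisors on a product of projective varieties is ample. Since a line bundle whose pullback under a finite surjective morphism of proper varieties is ample is itself ample, $x_{d+1}$ is ample. This argument needs no lower bound on $d$, no appeal to the $\mathbb{P}(E_r)$ description, and in fact no hypothesis on the characteristic; with it in place, the rest of your write-up (smoothness of the divisor, descent of ampleness under change of base field) goes through as stated.
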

\begin{proof}
See \cite[p.310, Proof of (2.2) Proposition]{ACGH} for the case the field $k = \Cl$. Recall the point $\p_0$ is a $k$-rational point by our assumptions.
 Therefore, ampleness of the divisor descends along extension of scalars to prove the second statement. (We thank the referee for this
observation.)
\end{proof}
\vskip .2cm
Next assume $k$ has arbitrary characteristic $p$. Let ${\rmX}= \Sym^{d+1}({\C})$ and ${\rmY}= \Sym^d({\C})$, for a smooth projective curve ${\C}$. Then $dim({\rmX}) =d+1$ and $dim({\rmY}) =d$
with ${\rmX}-{\rmY} = \Sym^{d+1}({\C}-\{\rmp_0\})$ where ${\rmY}$ is imbedded in ${\rmX}$ by the map $\rmD \mapsto D+\rmp_0$ in ~\eqref{i}. Clearly $\C -\{\rmp_0\}$ is affine (see,
for example, \cite[Chapter 11, Proposition 4.1]{Hart}), and
therefore so is $\Sym^{d+1}(\C-\{\rmp_0\})$. 
\begin{proposition} (See the proof of \cite[9.4 Corollary]{FK}.)
  \label{weak.l}
Suppose ${\rmY} \ra {\rmX}$ is a closed immersion of  projective smooth connected schemes over an algebraically closed field, so that the
complement $U={\rmX}-{\rmY}$ is affine and $\ell$ is a prime different from $char(k)$. Assume that $dim(\rmX) =\d$ and that $dim({\rmY}) = \d-1$. Then in the long-exact sequence in \'etale cohomology with proper supports
\[ \cdots \ra \H^{q-1}_{et}({\rmY}, \mu_{\ell^n}(\t)) \ra \H^q_{et,c}(U, \mu_{\ell^n}(\t)) \ra \H^q_{et}({\rmX}, \mu_{\ell^n}(\t)) \ra \H^q_{et}({\rmY}, \mu_{\ell^n}(\t))\]
\[ \ra \H^{q+1}_{et, c}(U, \mu_{\ell^n}(\t)) \ra \cdots, 
\]
$\H^q_{et, c}(U, \mu_{\ell^n}(\t)) \cong \H^{q+1}_{et, c}(U, \mu_{\ell^n}(\t)) \cong 0$ for all $q\le \d-2$, so that the
restriction map 
\[\H^q_{et}({\rmX}, \mu_{\ell^n}(\t)) \ra \H^q_{et}({\rmY}, \mu_{\ell^n}(\t))\]
 is an isomorphism for all $\i$ and for all $q \le \d-2$, i.e. for all
$q \le dim({\rmY})-1$. A corresponding results holds with integral (singular) cohomology with proper supports if the schemes
are defined over the field of complex numbers. The same conclusions hold as long as $\rm X$ is smooth and $\rmU$ is affine, even if $\rmY$ is not necessarily smooth.
\vskip .2cm

\end{proposition}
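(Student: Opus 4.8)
The plan is to deduce the statement from two standard facts --- Artin's vanishing theorem for affine schemes and Poincar\'e duality for smooth varieties --- after which the conclusion is pure bookkeeping with the localization long exact sequence; in particular, no hypothesis on $\rmY$ beyond its being a closed subscheme will be used, which is exactly the content of the last sentence. First I would record the geometry: $\rmX$ is smooth and connected, hence irreducible of dimension $d := dim(\rmX)$, and since $dim(\rmY) = d-1 < d$ we have $\rmY \subsetneq \rmX$, so $\rmU = \rmX - \rmY$ is a non-empty --- hence dense --- open subscheme of $\rmX$. Consequently $\rmU$ is \emph{smooth of pure dimension $d$} (an open subscheme of a smooth scheme) and, by hypothesis, affine; smoothness of $\rmY$ is irrelevant to everything below.

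Next I would assemble the long exact sequence, with no assumption on $\rmY$. Writing $j \colon \rmU \hookrightarrow \rmX$ for the open immersion and $i \colon \rmY \hookrightarrow \rmX$ for the complementary closed immersion, one has the short exact sequence of \'etale sheaves $0 \ra j_! j^* \cF \ra \cF \ra i_* i^* \cF \ra 0$ for every abelian sheaf $\cF$ on $\rmX_{et}$ (immediate on geometric stalks). Taking its long exact cohomology sequence on $\rmX$ and using the identifications $\H^q_{et}(\rmX, i_*(-)) \cong \H^q_{et}(\rmY, (-))$ (exactness of $i_*$) and $\H^q_{et}(\rmX, j_!(-)) \cong \H^q_{et,c}(\rmU, (-))$ (properness of $\rmX$), applied to $\cF = \mu_{\ell^n}(t)$, produces precisely the displayed sequence linking $\H^*_{et,c}(\rmU, \mu_{\ell^n}(t))$, $\H^*_{et}(\rmX, \mu_{\ell^n}(t))$ and $\H^*_{et}(\rmY, \mu_{\ell^n}(t))$.

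The heart of the argument is the vanishing $\H^q_{et,c}(\rmU, \mu_{\ell^n}(t)) = 0$ for all $q \le d-1$. Since $\rmU$ is smooth of dimension $d$ over the algebraically closed base field, Poincar\'e duality identifies $\H^q_{et,c}(\rmU, \mu_{\ell^n}(t))$ with the dual of $\H^{2d-q}_{et}(\rmU, \mu_{\ell^n}(d-t))$, the precise Tate twist being immaterial here; and since $\rmU$ is affine of dimension $d$, Artin's affine vanishing theorem (\cite{SGA4}; see also \cite{Mi}) gives $\H^m_{et}(\rmU, \cG) = 0$ for every torsion sheaf $\cG$ and every $m > d$. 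Combining, $\H^q_{et,c}(\rmU, \mu_{\ell^n}(t)) = 0$ whenever $2d - q > d$, i.e.\ $q < d$. Feeding this back in: for $q \le d-2$ both terms $\H^q_{et,c}(\rmU, \mu_{\ell^n}(t))$ and $\H^{q+1}_{et,c}(\rmU, \mu_{\ell^n}(t))$ that flank the restriction map $\H^q_{et}(\rmX, \mu_{\ell^n}(t)) \ra \H^q_{et}(\rmY, \mu_{\ell^n}(t))$ vanish, so that map is an isomorphism; since $dim(\rmY) = d-1$, this is exactly the asserted range $q \le dim(\rmY) - 1$.

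For the complex-analytic variant I would run the same scheme with singular cohomology: $\rmU(\comx)$ is a smooth affine complex variety of complex dimension $d$, so by the Andreotti--Frankel theorem it has the homotopy type of a CW complex of real dimension $\le d$, whence $\H^m(\rmU, \Z) = 0$ for $m > d$; Poincar\'e--Lefschetz duality on the oriented $2d$-manifold $\rmU(\comx)$ then gives $\H^q_c(\rmU, \Z) \cong \H_{2d-q}(\rmU, \Z) = 0$ for $q < d$, and the long exact sequence of the pair $(\rmX, \rmY)$ --- with $\H^q_c(\rmU, \Z) \cong \H^q(\rmX, \rmY; \Z)$ since $\rmX$ is compact --- concludes as before. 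I do not expect a genuine obstacle: the only non-formal ingredients are the two cited theorems, so the real ``work'' is the preliminary observation that $\rmU$ inherits smoothness from $\rmX$ and is affine --- so that Poincar\'e duality and Artin vanishing both apply --- together with careful tracking of cohomological degrees and twists; once that is in place nothing remains, and in particular the argument never uses whether $\rmY$ is smooth.
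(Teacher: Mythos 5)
Your proposal is correct and follows essentially the same route as the paper: the localization sequence from $0 \ra j_!j^*\mu_{\ell^n}(t) \ra \mu_{\ell^n}(t) \ra i_*i^*\mu_{\ell^n}(t) \ra 0$, combined with Poincar\'e duality on the smooth affine open $\rmU$ and Artin's affine vanishing theorem to kill $\H^q_{et,c}(\rmU,\mu_{\ell^n}(t))$ for $q<d$, with the observation that smoothness of $\rmY$ is never used. Your treatment of the complex-analytic case via Andreotti--Frankel is slightly more explicit than the paper's, but this is a matter of detail rather than a different argument.
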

\begin{proof}
 By Poincar\'e duality, we obtain the isomorphism $\H^q_{et,c}(U, \mu_{\ell^n}(\t)) \cong \H^{2d-q}_{et}(U, \mu_{\ell^n}(\d-\t))^{\vee}$ which vanishes 
for $2\d-q>\d$ since the cohomological dimension of the affine scheme ${\rm U}$ with respect to the locally constant sheaf $\mu_{\ell^n}(\d-\t)$ is $\d$. Here $\mu_{\ell^n}(\d-\t) = \Hom(\mu_{\ell^n}(\t), {\mathbb Z}/\ell^n)(\d)$ 
(where the $\Hom$ denotes the sheaf-Hom)  and $\H^{2d-q}_{et}(U, \mu_{\ell^n}(d-t))^{\vee}$ denotes the dual 
\newline \noindent
${\rm {Hom}}(\H^{2d-q}_{et}(U, \mu_{\ell^n}(\d-\t)), {\mathbb Z}/\ell^n)$.
The condition $2\d-q>\d$ is clearly equivalent to $q<d$. The long-exact sequence in \'etale cohomology is obtained from the 
short exact sequence: $1 \ra j_!j^*(\mu_{\ell^n}(\t)) \ra \mu_{\ell^n}(\t) \ra i_*i^*(\mu_{\ell^n}(\t)) \ra 1$, where $j:\rmU \ra \rmX$ and $i:\rmY \ra \rmX$ are 
the given immersions. Therefore, the long exact sequence exists even if $Y$ is singular. Observe also that the smoothness of $\rmX$ implies that of $\rmU$.
The above arguments show that all that is needed for the conclusions to hold is for $\rmX$ to be smooth and $\rmU$ to be  affine.
\end{proof}
\begin{remark}
 The above Proposition shows that we obtain the
{ weak Lefschetz} isomorphism without checking that the divisor ${\rmY}$ is ample, but by just knowing that ${\rmX}-{\rmY}$ is affine and smooth. 
A variant of the above Proposition also appears in \cite[Expose {\rmX}IV, Corollaire 3.3]{SGA4}, but it is stated in a somewhat different form, involving local cohomology.
\end{remark}

\vskip .2cm \noindent
We conclude this section with the following weak Lefschetz theorem.
\begin{proposition}
\label{LHT}
(i) Assume that the curve $\C$ is defined over the complex numbers. Then the restriction map
\[i^*:\H^2(\rm{Sym}^{d+1}({\C}),\Z) \rightarrow \H^2(\rm{Sym}^{d}({\C}),\Z)\]
is an isomorphism, for each $d\geq 3$. Moreover, this is an isomorphism of Hodge structures.
\vskip .2cm
(ii) Assume that the curve $\C$ is defined over an algebraically closed field $k$ of characteristic $p$ and let $\ell$ denote a
prime $\ne p$. Then the restriction map
\[i^*:\H^2_{et}(\rm{Sym}^{d+1}({\C}),\mu_{\ell^n}) \rightarrow \H^2_{et}(\rm{Sym}^{d}({\C}),\mu_{\ell^n})\]
is an isomorphism, for each $d \geq 3$. The same results holds if the curve $\C$ is only defined over a separably closed field $k$, but has
a $k$-rational point. 
\end{proposition}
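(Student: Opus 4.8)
The plan is to handle the two parts by the two inputs already at hand. For part (i), over $k=\Cl$: Lemma~\ref{amplediv} says that $i(\Sym^d(\C))$ is a smooth ample divisor on the smooth projective variety $\Sym^{d+1}(\C)$, which has dimension $d+1$. The classical Lefschetz hyperplane theorem then gives that $i^*:\H^q(\Sym^{d+1}(\C),\Z)\ra \H^q(\Sym^d(\C),\Z)$ is an isomorphism for $q\le (d+1)-2=d-1$; since $d\ge 3$, the case $q=2$ is included. For the refinement to Hodge structures, observe that $i^*$ is a morphism of pure Hodge structures of weight $2$ because $i$ is a morphism of smooth projective varieties over $\Cl$; as morphisms of pure Hodge structures of the same weight are strict, a such a morphism which is bijective on the underlying abelian groups is automatically an isomorphism of Hodge structures, so $i^*$ is an isomorphism of Hodge structures. (Alternatively, part (i) could be obtained from the singular-cohomology-with-proper-supports version of Proposition~\ref{weak.l}.)

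For part (ii), first assume $k$ is algebraically closed of characteristic $p$ and $\ell\ne p$. Put $\rmX=\Sym^{d+1}(\C)$, $\rmY=\Sym^d(\C)$ imbedded by $i$, and $\rmU=\rmX-\rmY=\Sym^{d+1}(\C-\{p_0\})$. As recorded just before Proposition~\ref{weak.l}, $\C-\{p_0\}$ is affine, hence so is its symmetric power $\rmU$; moreover $\rmX$ is smooth projective with $\dim(\rmX)=d+1$ and $\dim(\rmY)=d=\dim(\rmX)-1$. Proposition~\ref{weak.l} (applied with twist $\t=1$, the precise twist being immaterial over a separably closed field) then shows that $i^*:\H^q_{et}(\rmX,\mu_{\ell^n})\ra \H^q_{et}(\rmY,\mu_{\ell^n})$ is an isomorphism for all $q\le \dim(\rmY)-1=d-1$, which includes $q=2$ once $d\ge 3$. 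If $k$ is only separably closed, the hypothesis that $\C$ has a $k$-rational point $p_0$ ensures that $i$ is defined over $k$; since $\bar k/k$ is purely inseparable it induces an equivalence of small \'etale sites and hence isomorphisms $\H^*_{et}((-)_k,\mu_{\ell^n})\cong \H^*_{et}((-)_{\bar k},\mu_{\ell^n})$ compatible with $i^*$, so the assertion over $k$ follows from the one over $\bar k$.

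The only genuine obstacle here is the numerology, i.e. the hypothesis $d\ge 3$: it is exactly what is needed so that the cohomological degree $q=2$ lies in the range $q\le \dim(\rmY)-1$ of Proposition~\ref{weak.l} (equivalently, in the Lefschetz range in part (i)). For $d=2$ one would only obtain injectivity of $i^*$ in degree $2$, not surjectivity, and so the argument as it stands does not reach that case; this is the source of the $d\ge 3$ restriction flagged in the introduction.
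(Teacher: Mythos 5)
Your proof is correct and follows essentially the same route as the paper: Lemma~\ref{amplediv} plus the classical Lefschetz hyperplane theorem for (i), and Proposition~\ref{weak.l} applied to $\rmX=\Sym^{d+1}(\C)$, $\rmY=\Sym^{d}(\C)$ with affine complement for (ii), with the same numerology forcing $d\ge 3$. The only (harmless) variation is in the descent from separably closed to algebraically closed fields, where you invoke the topological invariance of the \'etale site under the purely inseparable extension $\bar k/k$ while the paper uses the transfer argument of Lemma~\ref{rad.ext}; both are valid.
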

\begin{proof}
W consider (i) first. By Lemma \ref{amplediv}, the divisor $\rm{Sym}^{d}({{\C}})+\rmp_0$ is a 
smooth ample divisor on  $\rm{Sym}^{d+1}({{\C}})$. When $d-1\geq 2$, the restriction map on cohomology groups is an isomorphism of 
Hodge structures, by the weak Lefschetz theorem.
\vskip .2cm
Next we consider (ii). The case when the base field is algebraically closed follows from Proposition ~\ref{weak.l} by taking ${\rmX}= \rm{Sym}^{d+1}({\C})$ and ${\rmY}= \rm{Sym}^{d}({\C})$
and observing that the dimension of $\rm{Sym}^{d+1}({\C}) = d+1$. The extension to the case the base field is only separably closed holds,
since $i: \Sym^d(\C) \ra \Sym^{d+1}(\C)$ is a closed immersion and one may then make use of Lemma ~\ref{rad.ext} below to reduce to the 
case the base field is algebraically closed.
\end{proof}

\begin{remark}
A proof of the first statement in the above proposition is given in \cite{BDH}, by finding explicit generators of the cohomology groups
$\H^2(\rm{Sym}^d({{\C}}),\Z)$. By using the weak Lefschetz theorem, we are able to avoid this step altogether, though
with a slight penalty that we need to restrict to the case where the degree $d$ is always at least 3, whereas in \cite{BDH},
it is possible to allow $d$ to be also $2$.
\end{remark}

\section{Brauer groups of $\rm{Sym}^d({\C})$ and $\Pic^d({\C})$ in positive characteristics}
We begin with the following result that enables one to pass from separably closed base fields to algebraically closed base fields.
\begin{lemma}
 \label{rad.ext}
Let $k\subseteq k'$ denote a radicial (i.e purely inseparable) extension of fields and that $char(k)=p>0$. If ${\rmX}$ is a scheme of finite type over $k$ and ${\rmX}'$ is its base extension to $k'$,
then the induced map $f:{\rmX}' \ra {\rmX}$ is a finite and flat map of degree some power of $p$. Therefore, it induces an isomorphism 
$f^*: CH^*({\rmX}) \otimes_{\mathbb Z} {\mathbb Z}/\ell^n \ra CH^*({\rmX}') \otimes_{\mathbb Z} {\mathbb Z}/\ell^n$ for any prime $ \ell \ne p$ and
any $n>0$. Moreover $f^*: \H_{et}^*({\rmX}, \mu_{\ell^n}) \ra \H^*_{et}({\rmX}', \mu_{\ell^n})$ is also an isomorphism.
\end{lemma}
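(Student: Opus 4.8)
The plan is to prove Lemma \ref{rad.ext} in three stages: first establish that $f : \rmX' \to \rmX$ is finite, flat of $p$-power degree; then deduce the claim about Chow groups mod $\ell^n$; and finally deduce the claim about \'etale cohomology with $\mu_{\ell^n}$-coefficients. For the first stage, note that $k \subseteq k'$ purely inseparable means $k'$ is a filtered colimit of finite purely inseparable extensions, each of $p$-power degree; after reducing to the finite case, $\Spec k' \to \Spec k$ is finite flat of degree $p^m$ for some $m$, and base change preserves finiteness and flatness, so $f : \rmX' = \rmX \times_{\Spec k} \Spec k' \to \rmX$ is finite and flat of degree $p^m$. (One should also remark that $f$ is radicial, hence a universal homeomorphism; this is what makes the \'etale site invariant.)

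For the Chow groups, the key point is that a finite flat morphism $f$ of degree $N$ has a pushforward $f_* : \CH^*(\rmX') \to \CH^*(\rmX)$ satisfying the projection formula $f_* f^* = N \cdot \mathrm{id}$; here $N = p^m$, which is invertible in $\Z/\ell^n$ for $\ell \ne p$. Symmetrically, $f^* f_* = N \cdot \mathrm{id}$ as well, since $f$ is a universal homeomorphism so every cycle on $\rmX'$ is (up to $p$-power multiplicity) pulled back from $\rmX$ — more precisely, for $f$ radicial, finite and flat one has $f^*f_*(\alpha) = p^? \alpha$; combining, both composites are multiplication by a power of $p$, hence invertible after $\otimes_{\Z} \Z/\ell^n$. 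Therefore $f^* \otimes \Z/\ell^n$ is an isomorphism. One reduces the colimit case to the finite case by the compatibility of Chow groups with filtered colimits of the base (limit of schemes with affine flat transition maps).

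For the \'etale cohomology, the cleanest route is the topological invariance of the small \'etale site: since $f : \rmX' \to \rmX$ is a universal homeomorphism (being finite, surjective and radicial), the pushforward $f_*$ induces an equivalence between the \'etale topos of $\rmX'$ and that of $\rmX$ (SGA~4, or EGA~IV for the affine universal homeomorphism statement). Under this equivalence $f^* \mu_{\ell^n}$ on $\rmX'$ corresponds to $\mu_{\ell^n}$ on $\rmX$ (the $\ell^n$-th roots of unity are insensitive to the purely inseparable extension since $\ell \ne p$), so $f^* : \H^*_{et}(\rmX, \mu_{\ell^n}) \to \H^*_{et}(\rmX', \mu_{\ell^n})$ is an isomorphism. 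Alternatively, and more in keeping with the paper's elementary style, one can use the trace/transfer argument: for $f$ finite flat of degree $N$ there is a transfer $f_* : \H^*_{et}(\rmX', F) \to \H^*_{et}(\rmX, F)$ with $f_* f^* = N$, and since $f$ is radicial the proper (or finite) base change together with $f_* \cO_{\rmX'}$ being locally free forces $f^* f_* $ to also be a power of $p$ on cohomology; invertibility of $p^m$ mod $\ell^n$ finishes it.

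The main obstacle is the direction $f^* f_* = (\text{power of } p) \cdot \mathrm{id}$, i.e. showing $f^*$ is not merely split injective but genuinely an isomorphism mod $\ell^n$: this is where one must really use that $f$ is radicial and not just finite flat (a non-radicial degree-$N$ cover would only give split injectivity). For \'etale cohomology this is subsumed by topological invariance of the \'etale site, so I would lean on that; for Chow groups one uses that over a perfect closure the pullback on cycles is an isomorphism up to $p$-torsion because $\rmX'_{red} \to \rmX_{red}$ is a universal homeomorphism and cycle groups only see the underlying topological space together with function fields, whose separable parts are unchanged. I would state this carefully and cite the standard references (EGA~IV \S18 for universal homeomorphisms and the topological invariance of the \'etale site, and Fulton for the projection formula), rather than reprove it.
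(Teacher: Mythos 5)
Your proof is correct, and for the Chow-group statement it is essentially the paper's own argument: the projection formula gives $f_*f^*=p^m\cdot\mathrm{id}$, the radicial hypothesis is what gives $f^*f_*=p^m\cdot\mathrm{id}$ (the paper cites Quillen's K-theoretic version of this identity and adapts it to Chow groups --- exactly the step you correctly identify as the crux, and which would fail for a general finite flat cover), and $p^m$ is a unit in $\Z/\ell^n$. Where you genuinely diverge is the \'etale-cohomology statement: the paper disposes of it with ``the second statement follows similarly,'' i.e.\ by running the same transfer argument with the trace map on $\H^*_{et}(-,\mu_{\ell^n})$, whereas your primary route invokes topological invariance of the small \'etale site under the universal homeomorphism $f$ (SGA~4, Exp.~VIII). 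Your route is cleaner and strictly stronger --- it yields an equivalence of \'etale topoi and hence an isomorphism with arbitrary coefficients, with no trace map needed --- at the cost of importing a bigger theorem; the paper's transfer argument has the virtue of treating both statements uniformly. You are also more careful than the paper on one point worth keeping: in the actual application $k$ is separably closed and $k'$ is its algebraic (= perfect) closure, which may be an \emph{infinite} purely inseparable extension, so $f$ is not literally finite and one must pass to the filtered limit over finite subextensions as you indicate.
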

\begin{proof} Clearly the induced map $\pi:Spec \, k' \ra Spec \, k$ is a finite and flat map of degree some power of $p$. Then one has
 an induced push-forward map $f_*: CH^*({\rmX}', {\mathbb Z}/\ell^n) \cong CH^*({\rmX}') \otimes_{\mathbb Z} {\mathbb Z}/\ell^n \ra
CH^*({\rmX}, {\mathbb Z}/\ell^n) \cong CH^*({\rmX}') \otimes_{\mathbb Z} {\mathbb Z}/\ell^n$ so that the compositions $f_*\circ f^* $, $f^* \circ f_*$  are multiplications
 by the degree of $f$, which is a power of $p$. Here the statement that $f_*\circ f^*$ is multiplication by the degree of $f$ is a consequence
 of the projection formula and does not use the fact that $f$ is radicial. One needs the assumption that $f$ is radicial to conclude
 that the composition  $f^*\circ f_*$ is also multiplication by the degree of $f$. (A proof of this statement in the
 context of Algebraic K-theory may be found in \cite[\S 7, Proposition 4.8]{Qu} and may be adapted to Chow groups and \'etale cohomology 
 readily.)
 \vskip .2cm
 But any power of $p$ is a unit in ${\mathbb Z}/\ell^n$ so that the map $f^*$ is an isomorphism.
This proves the first statement. The second statement follows similarly.
\end{proof}

We now obtain the following theorem.

\begin{theorem}
 \label{Sym.and.Pic}
Assume that ${\C}$ is a smooth projective curve over the separably closed field $k$.
\vskip .1cm
(i) Then the restriction map $Br'(\Sym ^{d+1}({\C}))_{\ell^n} \ra Br'(\Sym^{d}({\C}))_{\ell^n}$ is an isomorphism for $d \ge 3$.
\vskip .2cm
(ii) The Abel-Jacobi maps $\xi_{\rmd}:\Sym^d({\C}) \ra Pic^d({\C})$ induce isomorphisms $\xi_{\rmd}^*:\Br'(\Pic^d({\C}))_{\ell^n}  \ra Br'(\Sym^d({\C}))_{\ell^n}$  for all $d \ge 3$.
\end{theorem}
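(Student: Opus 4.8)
\emph{Proof plan for Theorem \ref{Sym.and.Pic}.} Both parts will be deduced by combining the weak Lefschetz isomorphism of Proposition \ref{LHT}(ii), the Kummer short-exact sequence \eqref{Kummer.seq.2}, the projective-bundle description of \S\ref{Poincare}, and the known computation of the Picard groups of symmetric powers of curves.

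For (i), I would write \eqref{Kummer.seq.2} once for $\rmX = \Sym^{d+1}(\C)$ and once for $\rmX = \Sym^{d}(\C)$, and note that the closed immersion $i$ of \eqref{i} induces a morphism between these two short-exact sequences with vertical arrows $i^{*}$. By Proposition \ref{LHT}(ii) the vertical arrow on the middle terms $\H^{2}_{et}(\Sym^{d+1}(\C),\mu_{\ell^{n}}) \to \H^{2}_{et}(\Sym^{d}(\C),\mu_{\ell^{n}})$ is an isomorphism for $d \ge 3$, which is the only place this hypothesis is used. A snake-lemma chase then shows that, once the middle arrow is an isomorphism, the left vertical arrow $\NS(\Sym^{d+1}(\C))/\ell^{n} \to \NS(\Sym^{d}(\C))/\ell^{n}$ is automatically injective, while the right vertical arrow $\Br'(\Sym^{d+1}(\C))_{\ell^{n}} \to \Br'(\Sym^{d}(\C))_{\ell^{n}}$ is an isomorphism precisely when that left arrow is also surjective. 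This surjectivity comes from Collino's computation of the Chow groups, in particular $\CH^{1} = \Pic$, of all symmetric powers of $\C$ (see \cite{Coll}, the positive-characteristic counterpart of MacDonald's \cite{Mac}), together with its compatibility with the maps $i$: explicitly, $\Pic(\Sym^{d}(\C))$ is generated modulo $\Pic^{0}$ by $\xi_{d}^{*}\NS(\Pic^{d}(\C))$ and the class of the divisor $\Sym^{d-1}(\C)+\rmp_{0}$, and both families of generators lift along $i^{*}$. One reduces from a separably closed base field to its algebraic closure by Lemma \ref{rad.ext}, which identifies $\NS(\,\cdot\,)/\ell^{n}$ and $\H^{2}_{et}(\,\cdot\,,\mu_{\ell^{n}})$ compatibly with $i^{*}$.

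For (ii), I would use descending induction on $d$. If $g \le 1$ then $\xi_{d}$ is already a projective bundle for every $d \ge 1$ (and for $g=0$ the relevant Brauer groups vanish), so one may assume $g \ge 2$, whence $2g-1 \ge 3$. For the base case $d \ge 2g-1$, \S\ref{Poincare} identifies $\xi_{d}$ with the projective bundle $\Phi : \Sym^{d}(\C) = \mathbb{P}(E_{d}) \to \Pic^{d}(\C)$ over the abelian variety $\Pic^{d}(\C)$. The projective-bundle formula in \'etale cohomology gives $\H^{2}_{et}(\mathbb{P}(E_{d}),\mu_{\ell^{n}}) \cong \H^{2}_{et}(\Pic^{d}(\C),\mu_{\ell^{n}}) \oplus \Z/\ell^{n}$ and similarly $\Pic(\mathbb{P}(E_{d}))/\ell^{n} \cong \bPic(\Pic^{d}(\C))/\ell^{n} \oplus \Z/\ell^{n}$, the extra summand in each being generated by $c_{1}$ of the relative $\cO(1)$; feeding these into \eqref{Kummer.seq.2} and cancelling the common $\Z/\ell^{n}$ shows $\xi_{d}^{*}=\Phi^{*}$ is an isomorphism on the $\ell^{n}$-torsion of the Brauer group. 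For the inductive step, suppose $3 \le d$ and that $\xi_{d+1}^{*}$ is such an isomorphism. The identity $\xi_{d+1}\circ i = t\circ\xi_{d}$ holds as morphisms $\Sym^{d}(\C)\to\Pic^{d+1}(\C)$, where $i$ is the map \eqref{i} and $t:\Pic^{d}(\C)\to\Pic^{d+1}(\C)$, $L\mapsto L\otimes\cO_{\C}(\rmp_{0})$, is an isomorphism of $k$-varieties; hence $i^{*}\circ\xi_{d+1}^{*}=\xi_{d}^{*}\circ t^{*}$ on $\Br'(\,\cdot\,)_{\ell^{n}}$. Since $t^{*}$ is an isomorphism, $\xi_{d+1}^{*}$ is one by hypothesis, and $i^{*}$ is one by part (i) (here $d\ge 3$ re-enters), it follows that $\xi_{d}^{*}$ is an isomorphism. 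Descending from $d \ge 2g-1$ to $d = 3$ completes the proof.

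The step I expect to be the main obstacle is the external input to part (i): the surjectivity of $i^{*}$ on N\'eron--Severi groups modulo $\ell^{n}$. Weak Lefschetz, the Kummer sequence, the projective-bundle formula and the translation isomorphism on Picard schemes are all formal; the real work is extracting from Collino's paper the precise form of the Picard-group computation, valid in positive characteristic and compatible with the maps $i$ and $\xi_{d}$.
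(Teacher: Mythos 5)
Your proposal is correct and follows essentially the same route as the paper: part (i) is the morphism of Kummer sequences \eqref{Kummer.seq.2} with the middle arrow handled by weak Lefschetz (Proposition \ref{LHT}(ii)), the left arrow by Collino's computation of $\Pic(\Sym^d(\C))$, and the reduction to an algebraically closed base via Lemma \ref{rad.ext}; part (ii) is the same reduction, via the square $\xi_{d+1}\circ i = t\circ\xi_d$ and part (i), to the range $d\ge 2g-1$ where $\xi_d$ is a projective bundle. The only deviation is that for the projective-bundle case the paper simply cites Gabber's theorem \cite[p.~193]{Ga}, whereas you rederive it from the projective-bundle formula fed into \eqref{Kummer.seq.2} --- a harmless, slightly more self-contained variant.
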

\begin{proof} 
 For (i) we now consider the commutative diagram:
\[\xymatrix{{0} \ar@<1ex>[r] & {\bPic(\Sym^{d+1}({\C})))/\ell^n } \ar@<1ex>[r] \ar@<1ex>[d] & {\H^2_{et}(\Sym^{d+1}({\C}), \mu_{\ell^n})} \ar@<1ex>[r] \ar@<1ex>[d] & { \Br'(\Sym^{d+1}({\C}))_{\ell^n}} \ar@<1ex>[r] \ar@<1ex>[d]  & {0}\\
 {0} \ar@<1ex>[r] & {\bPic(\Sym^{d}({\C})))/\ell^n } \ar@<1ex>[r]  & {\H^2_{et}(\Sym^{d}({\C}), \mu_{\ell^n})} \ar@<1ex>[r]  & { \Br'(\Sym^{d}({\C}))_{\ell^n}} \ar@<1ex>[r]   & {0}.}\\
 \]
By Lemma ~\ref{rad.ext}, one observes that replacing the base field, which is assumed to be separably closed, by its algebraic closure,
induces an isomorphism on the terms appearing in the first two columns, so that the one obtains an induced isomorphism on
the $\ell^n$-torsion parts of the cohomological Brauer groups. Therefore, one may assume the base field is algebraically closed.
By \cite[Theorem 2]{Coll}, the left-most map is surjective. By the weak Lefschetz theorem, the middle vertical map is an isomorphism
for $d-1\ge 2$, i.e. for $d \ge 3$, since the dimension of $\Sym^{d}({\C}) = d$. Since the diagram clearly commutes,
the left-most map is also an injection, and therefore it is also an isomorphism. Therefore, the five Lemma shows that the
last vertical map is also an isomorphism for $ d \ge 3$. This proves (i).
\vskip .2cm
For (ii) we consider the commutative square:
\[\xymatrix{{\Br'(\Pic^{d+1}({\C}))_{\ell^n} } \ar@<1ex>[r] \ar@<1ex>[d]^{\xi_{d+1}^*} & {\Br'(\Pic^{d}({\C}))_{\ell^n}} \ar@<1ex>[d]^{\xi_{\rmd}^*} \\
 {\Br'(\Sym^{d+1}({\C}))_{\ell^n} } \ar@<1ex>[r]  & {\Br'(\Sym^{d}({\C}))_{\ell^n}} .}
 \]
Since the top row is evidently an isomorphism, and the bottom row is an isomorphism for $d \ge 3$, it suffices to show
that the map $\xi_{\rmd}^*:\Br'(\Pic^d({\C}))_{\ell^n} \ra \Br'(\Sym^d({\C}))_{\ell^n}$ is an isomorphism for all $\rmd$ large. For $d \ge 2g-1$, we already
observed in \S ~\ref{Poincare} that $\xi_{\rmd}:\Sym^d({\C}) \ra \Pic^d({\C})$ is a projective space bundle associated to a vector bundle. Therefore,
by \cite[p. 193]{Ga}, the map $\Br'(\Pic^d({\C}))_{\ell^n} \ra \Br'(\Sym^d({\C}))_{\ell^n}$ is an isomorphism. (One may also
 note that \cite{Aus} and  \cite{El} are results related to \cite[p. 193]{Ga}.) This completes the proof
of (ii) and hence that of the theorem.

\end{proof}
\begin{remark} Clearly the above theorem proves the second isomorphism in Theorem ~\ref{main.thm.1}.
 \end{remark}

\section{Motivic cohomology, \'Etale cohomology and Brauer groups of Quot-schemes associated to projective smooth curves}

We begin by briefly reviewing Motivic Thom classes, which will be needed in the proofs in this section.

\subsection{Motivic Thom classes} 
A theory of Thom-classes and Chern classes for vector bundles on algebraic varieties has been discussed in detail
by Panin in the motivic context: see \cite[\S 1]{Pan}. We summarize the main results here. One may begin with the
homotopy purity theorem (see \cite[Theorem  2.21]{MV}) which says the following: Given ${\rmY} \ra {\rmX}$ a closed immersion of
smooth schemes of pure dimension over a field $k$, and with $\rmN$ the normal bundle associated to the above immersion, one obtains
an isomorphism of pointed (simplicial) sheaves ${\rmX}/({\rmX}-{\rmY}) \simeq \rmN/(\rmN-{\rmY})$ in the motivic homotopy category over $k$, i.e.
the homotopy category of pointed simplicial sheaves on the Nisnevich site of $k$ and with ${\mathbb A}^1$  inverted.
Let the above homotopy category be denoted $HSp_k$. This is the unstable motivic homotopy category.
\vskip .2cm

Let $\Sm/k$ denote the category whose objects are pairs $({\rmX}, \rmU)$ where ${\rmX}$ is a smooth scheme over $k$ of finite type and
$U$ open in ${\rmX}$. Morphisms $({\rmX}', \rmU') \ra ({\rmX}, \rmU)$ are maps of schemes $f: {\rmX}' \ra {\rmX}$ so  that $f(\rmU') \subseteq \rmU$.
A cohomology theory on $\Sm/k$ (see \cite[Definition 1.1]{Pan}) is a contravariant functor 
\[\rmA^{\bullet, *}:\Sm/k \ra \mbox{(bi-graded abelian groups)}\]
so that it has localization sequences relating the cohomology of ${\rmX} =({\rmX}, \phi)$, $\rmU=(\rmU, \phi)$ and $({\rmX}, \rmU)$ in the usual manner,
it is homotopy invariant (in the sense that $\rmA^{\bullet, *}({\rmX} \times {\mathbb A}^1) \simeq \rmA^{\bullet, *}({\rmX})$) and has \'etale excision, i.e. if $e: ({\rmX}', \rmU')  \ra ({\rmX}, \rmU)$ is
\'etale and the induced map $ {\rmX}' -\rmU' \ra {\rmX}-\rmU$ is an isomorphism, then the induced map $e^*:\rmA^{\bullet, *}({\rmX}, \rmU) \ra \rmA^{\bullet, *}({\rmX}', \rmU')$
is an isomorphism. Such a cohomology theory is multiplicative, if there is a bi-graded pairing
\[\rmA^{p', q'}({\rmX}', \rmU') \otimes \rmA^{p, q}({\rmX}, \rmU) \ra 
\rmA^{p'+p, q'+q}({\rmX}\times {\rmX}', {\rmX}'\times \rmU \cup \rmU' \times {\rmX}))\]
that is associative in the obvious sense, there is a unique 
class $1 \in \rmA^{0,0}(Spec \, k)$ that acts as the identity for the above pairing and the 
above pairing
satisfies a partial Leibniz-rule: see \cite[Definition 1.5]{Pan}.

\vskip .2cm
We will also assume that the given cohomology theory extends uniquely to a contravariant functor 
\[HSp_k \ra \mbox{(bi-graded abelian groups)}\]
so that $\rmA^{\bullet, *}({\rmX}, \rmU) $ identifies with $\rmA^{\bullet, *}({\rmX}/\rmU)$.
It follows in particular,
that one obtains an isomorphism $\rmA^{\bullet, *}({\rmX}, \rmU) \simeq \rmA^{\bullet, *}(\rmN, \rmN -({\rmX}-\rmU))$. (One may often apply a deformation to the normal cone to obtain
this isomorphism explicitly, depending on the cohomology theory.) If we denote ${\rmX}- \rmU$ by the closed subscheme ${\rmY}$, and denote
$\rmA^{\bullet, *}({\rmX}, \rmU)$ ($\rmA^{\bullet, *}(\rmN, \rmN-{\rmY})$) by $\rmA_{\rmY}^{\bullet, *}({\rmX})$ ($\rmA_{\rmY}^{\bullet, *}(\rmN)$, \res), the above isomorphism may be denoted $\rmA_{\rmY}^{\bullet, *}({\rmX}) \simeq \rmA_{\rmY}^{\bullet, *}(\rmN)$.
\vskip .2cm
Assume $\rmA^{\bullet, *}$ is a multiplicative cohomology theory in the above sense. Then this cohomology theory has {\it Thom classes}
if for every smooth scheme ${\rmX}$ and every vector bundle $\rmE$ of rank $c$ over ${\rmX}$, there is a unique class 
${\rm Th}(\rmE) \in \rmA^{2c, c}_{\rmX}(\rmE)$, so that taking cup product with the class ${\rm Th}(\rmE)$ induces an isomorphism (which we call {\it Thom isomorphism}) :
\[\rmA^{\bullet, *}({\rmX}) \ra \rmA^{\bullet+2c, *+c}_{\rmX}(\rmE).\]
The above isomorphism is further required to be functorial
with respect to maps $f: {\rmX}' \ra {\rmX}$ of smooth schemes. In this situation, one obtains a {\it Gysin-map} associated to each closed immersion 
$i:{\rmY} \ra {\rmX}$ of smooth schemes of pure codimension $c$ by defining
\begin{equation}
 \label{Gysin}
i_*: \rmA^{p, q}({\rmY}) \ra \rmA^{p+2c, q+c}({\rmX}) 
\end{equation}
as follows. We let $i_*(\alpha) $ be given by the image of  ${\rm Th}(\rmN) \cup  \alpha$ under the map 
$\rmA^{p+2c, q+c}_{\rmY}(\rmN) \cong {\rm A}^{p+2c, q+c}_{\rmY}({\rmX}) \ra {\rm A}^{p+2c, q+c}({\rmX})$.
\vskip .2cm
It is then shown in \cite[(1.2.2)]{Pan} that providing the cohomology theory $\rmA^{\bullet, *}$ with Thom-classes is equivalent to
providing $\rmA^{\bullet, *}$ with a theory of Chern-classes for vector bundles. 
\vskip .2cm
An important example of such a cohomology theory is motivic cohomology: $\H_{\M}^{p, q}({\rmX}) \cong {\rm CH}^{q}({\rmX}, 2q-p)$
where ${\rm CH}^{q}({\rmX}, 2q-p)$ denotes the higher Chow groups of Bloch. Taking $p=2q$, we then obtain the isomorphism
$\H_{\M}^{2q, q}({\rmX}) = {\rm CH}^q({\rmX}, 0) = {\rm CH}^q({\rmX})$. Another example of such a cohomology theory is \'etale cohomology: ${\rmX} \mapsto \H_{et}^{p}({\rmX}, \mu_{\ell^n}(q))$,
with $\ell $ a prime different from $char (k) =p$ and where $\mu_{\ell^n}(q)$ denotes the sheaf $\mu_{\ell^n}$, Tate-twisted $q$-times.
\vskip .2cm
The cycle map from Chow-groups to \'etale cohomology clearly preserves
Chern classes of vector bundles and therefore also preserves Thom-classes of vector bundles. Moreover the cycle map is compatible with
localization sequences and therefore with the Gysin map $i_*$ associated to closed immersions of smooth schemes.

\subsection{Proof of the isomorphism $\phi_{\rmd}^*: \Br'(\Sym^d({\C}))_{\ell^n} \ra \Br'(\Q(r,d))_{\ell^n}$}
We will assume once again that, {\it throughout this section, the base field is separably closed.}
Given  a projective smooth curve ${\C}$ over $k$, we let $\Q(r, d)$ denote the Quot-scheme parameterizing all torsion quotients
of degree $\rmd$  of the $\O_{{\C}}$-module, $\O_{{\C}}^{\oplus r}$. Then $\Q(r, d)$ is a smooth projective variety of dimension $rd$ over $k$. 
Given a point $Q$ in $\Q(r, d)$, we have the short-exact sequence:
\[ 0 \ra {\mathcal F}(Q) \ra \O_{{\C}}^{\oplus r} \ra Q \ra 0.\]
Sending $Q$ to the scheme-theoretic support of the quotient for the induced homomorphism $\wedge ^r {\mathcal F}(Q) \ra \wedge ^r (\O_{\C}^{\oplus r})$, which will
 be a degree $\rmd$ effective zero dimensional cycle on $\C$, we obtain a 
morphism
\begin{equation}
     \label{phi}
\phi_{\rmd}: \Q(r, d) \ra \Hilb^d(\C) \cong \Sym^d({\C}).
    \end{equation}
\vskip .2cm
Next one observes that there is a natural action of ${\mathbb G}_m^r$ on $\Q(r,d)$ induced from the natural action of ${\mathbb G}_m$ on $\C$: see \cite{Bi} and also \cite[\S5]{BDH}. Let ${\mathbf {Part}}^k_r =
\{ {\mathbf m} =(m_1, \cdots , m_r)\}$ denote the set of partitions of $k$ of length $r$, i.e. sequences of integers
$(m_1, \cdots, m_r)$ so that $m_i \ge 0$ and $\Sigma_i m_i = k$. The connected components of the fixed point scheme, for the above action of ${\mathbb G}_m^r$ on
$\Q(r, d)$, are the $\Sym^{{\mathbf m}}({\C}) = \Sym^{m_1}({\C}) \times \cdots \times \Sym^{m_r}(\C)$. One obtains an associated Bialynicki-Birula
 decomposition of $\Q(r,d)$ which are given by $\{\Sym^{{\mathbf m} +}({\C})| {\mathbf m}\}$ where $\Sym^{{\mathbf m}+}({\C})$ is an affine space bundle over the 
corresponding component $\Sym^{{\mathbf m}}({\C})$. The open cell corresponds to ${\mathbf m}_1 = (0, \cdots, 0, \rmd)$ and
the second largest cell corresponds to ${\mathbf m}_2 = (0, \cdots, 0, 1, \rmd-1)$ so that $\Sym^{{\mathbf m}_1}({\C}) = \Sym^d({\C})$ and
$\Sym^{{\mathbf m}_2}({\C}) = \Sym^{d-1}({\C})\times {\C}$. 

Therefore, the discussion in \cite[Theorem 2.4]{dB} applies to show that the
long-exact localization sequences in Chow groups as well as \'etale cohomology with $\mu_{\ell^n}$-coefficients associated to the stratification of $\Q(r,d)$ by $\{\Sym^{{\mathbf m} +}({\C})|{\mathbf m}\}$
break up into short-exact sequences. i.e. the long-exact sequences
 \begin{equation}
   \label{loc.seq.et}
\cdots \ra \H^0_{et}(\Sym^{d-1}({\C})\times {\C}, \mu_{\ell^n}) {\overset {f^{et}_*} \ra} \H^2_{et}(Q(r,d), \mu_{\ell^n}) \ra \H^2_{et}(\Sym^d({\C}), \mu_{\ell^n}) \ra \cdots \mbox{ and}
    \end{equation}
\begin{equation}
 \label{loc.seq.mot}
\cdots \ra \CH^0(\Sym^{d-1}({\C})\times {\C}) {\overset {f^{CH}_*} \ra} \CH^1(Q(r,d)) \ra \CH^1(\Sym^d({\C})) \ra \cdots
\end{equation}
break up into short-exact sequences. Observe that for $\H^2_{et}$ and $\CH^1$, no strata of lower dimension other than $\Sym^{{\mathbf m}_2+}({\C})$-contribute,
so that 
\[\H^2_{et}(\Q(r, d), \mu_{\ell^n}) \cong H^2_{et}(\Sym^{{\mathbf m}_1+}({\C}) \cup \Sym^{{\mathbf m}_2+}({\C}), \mu_{\ell^n}) \mbox{ and }\]
\[\CH^1(\Q(r,d)) \cong \CH^1(\Sym^{{\mathbf m}_1+}({\C}) \cup \Sym^{{\mathbf m}_2+}({\C})). \]

Here $f: \Sym^{{\mathbf m}_2+}({\C}) \ra \Q(r, d)$ is the corresponding closed immersion (which is of codimension $1$), and
$f_*^{\CH}$ and $f^{et}_*$ denote the corresponding Gysin homomorphisms. 
 Therefore, the splitting of the long-exact sequences in ~\eqref{loc.seq.et} and ~\eqref{loc.seq.mot} provides the calculation:
\begin{equation}
 \label{key.calc.quot}
 \CH^1(\Q(r, d)) \cong \CH^1(\Sym^d({\C})) \oplus Im(f^{\CH}_*), \, \H^2_{et}(\Q(r,d), \mu_{\ell^n}) \cong \H^2_{et}(\Sym^d(C), \mu_{\ell^n}) \oplus Im(f^{et}_*).
\end{equation}
Moreover one may observe that, since $\Sym^{d-1}({\C}) \times {\C}$ is connected, ${\rm Im}(f^{\CH}_*) = \{f^{\CH}_*(1)\} ={\mathbb Z}$ and
${\rm Im}(f^{et}_*) = \{f^{et}_*(1)\}={\mathbb Z}/\ell^n$. Observe that $f^{{\rm CH}}_*(1)$ is the image of the Thom-class
\[ {\rm Th}\,  \in \, \CH^1_{\Sym^{{\mathbf m}_2+}({\C})}(\Q(r, d))\]
under the map $\CH^1_{\Sym^{{\mathbf m}_2+}({\C})}(\Q(r,d)) \ra \CH^1(\Q(r, d))$, while $f^{et}_*(1)$ is the image of the corresponding Thom-class in \'etale cohomology. Clearly the Thom-class
in Motivic cohomology maps to the corresponding Thom-class in \'etale cohomology. Moreover the definition of the Gysin map as in ~\eqref{Gysin}
shows they are compatible under the cycle map.
\vskip .2cm
Next we also observe that the map $\phi_{\rmd}: \Q(r,d ) \ra \Sym^{{\mathbf m}_1}({\C}) $ and the open 
inclusion $j:\Sym({\C})^{{\mathbf m}_1+} \ra Q(r, d)$ are so that, at the level of the above cohomology groups, $j^* \circ \phi_{\rmd}^* =id$,
i.e. the induced map $\phi_{\rmd}^*$ is a split monomorphism.
\begin{theorem}
 \label{quot.iso}
The map $\phi_{\rmd}: \Q(r, d) \ra \Sym^d(\C)$ induces an isomorphism on the cohomological Brauer groups $\Br'(\quad ) _{\ell^n}$.
\end{theorem}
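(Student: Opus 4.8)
The plan is to compare the two Kummer short exact sequences \eqref{Kummer.seq.2}, one for $\rmX=\Sym^d({\C})$ and one for $\rmX=\Q(r,d)$, joined into a commutative ladder by the pullback $\phi_{\rmd}^*$, and then to apply the snake lemma. Write $v_1,v_2,v_3$ for the three vertical maps, all of them $\phi_{\rmd}^*$, so that $v_1\colon\bPic(\Sym^d({\C}))/\ell^n\to\bPic(\Q(r,d))/\ell^n$, $v_2\colon\H^2_{et}(\Sym^d({\C}),\mu_{\ell^n})\to\H^2_{et}(\Q(r,d),\mu_{\ell^n})$ and $v_3\colon\Br'(\Sym^d({\C}))_{\ell^n}\to\Br'(\Q(r,d))_{\ell^n}$. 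Since $\bPic=\CH^1$ for a smooth variety, the computation \eqref{key.calc.quot}, together with the fact recorded just above the theorem that $\phi_{\rmd}^*$ is a split monomorphism on $\CH^1$ and on $\H^2_{et}(-,\mu_{\ell^n})$, with complements $\mathrm{Im}(f^{\CH}_*)\cong\Z$ and $\mathrm{Im}(f^{et}_*)\cong\Z/\ell^n$ respectively --- where $f\colon\Sym^{{\mathbf m}_2+}({\C})\hookrightarrow\Q(r,d)$ is the codimension-one closed immersion --- shows, after tensoring the $\CH^1$-statement with $\Z/\ell^n$, that $v_1$ and $v_2$ are injective, that $\mathrm{coker}\,v_1\cong\mathrm{Im}(f^{\CH}_*)\otimes\Z/\ell^n\cong\Z/\ell^n$ is generated by the reduction of the Chow Thom class $f^{\CH}_*(1)$, and that $\mathrm{coker}\,v_2\cong\mathrm{Im}(f^{et}_*)\cong\Z/\ell^n$ is generated by the \'etale Thom class $f^{et}_*(1)$.

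The next step is to identify the map $\mathrm{coker}\,v_1\to\mathrm{coker}\,v_2$ induced on cokernels. It is induced by the left-hand arrow of the lower Kummer sequence, which is (up to sign) the first Chern class map $\bPic(\Q(r,d))/\ell^n\to\H^2_{et}(\Q(r,d),\mu_{\ell^n})$; by the compatibility of the cycle map with the Gysin homomorphisms --- coming from the definition \eqref{Gysin} of the Gysin map together with the fact that the cycle map sends the motivic Thom class to the \'etale Thom class --- this arrow carries $f^{\CH}_*(1)$ to $f^{et}_*(1)$. Hence $\mathrm{coker}\,v_1\to\mathrm{coker}\,v_2$ is the map $\Z/\ell^n\to\Z/\ell^n$ sending a generator to a generator, and is therefore an isomorphism.

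Since both rows of the ladder are short exact and $v_1,v_2$ are injective, the snake lemma produces an exact sequence
\[
0\longrightarrow \ker v_3\longrightarrow \mathrm{coker}\,v_1\longrightarrow \mathrm{coker}\,v_2\longrightarrow \mathrm{coker}\,v_3\longrightarrow 0 .
\]
Injectivity of $\mathrm{coker}\,v_1\to\mathrm{coker}\,v_2$ gives $\ker v_3=0$, and its surjectivity gives $\mathrm{coker}\,v_3=0$; hence $v_3=\phi_{\rmd}^*\colon\Br'(\Sym^d({\C}))_{\ell^n}\to\Br'(\Q(r,d))_{\ell^n}$ is an isomorphism, which is the assertion. (If one prefers to invoke \cite{dB} and \eqref{key.calc.quot} only over an algebraically closed field, one reduces to that case beforehand, exactly as in the proof of Theorem~\ref{Sym.and.Pic}(i), using Lemma~\ref{rad.ext}.)

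The one genuinely delicate step, I expect, is the identification of Thom classes used in the second paragraph: one must know not merely that the first Chern class of $f^{\CH}_*(1)$ lies in the cyclic group $\mathrm{Im}(f^{et}_*)\cong\Z/\ell^n$, but that it is a \emph{generator} of it --- that is, that passage from Chow groups to \'etale cohomology respects Thom classes exactly. This is precisely what the formalism of motivic Thom classes and Gysin maps set up in the previous subsection provides; once it is granted, the remainder is a formal diagram chase resting on the explicit computation \eqref{key.calc.quot}.
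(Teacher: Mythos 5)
Your proposal is correct and follows essentially the same route as the paper: reduction to an algebraically closed field via Lemma~\ref{rad.ext}, the ladder of Kummer sequences~\eqref{Kummer.seq.2}, the splitting~\eqref{key.calc.quot} with cokernels generated by $f^{\CH}_*(1)$ and $f^{et}_*(1)$, compatibility of the motivic and \'etale Thom classes under the cycle map, and the snake lemma. You have in fact spelled out the identification of the map on cokernels more explicitly than the paper does, but it is the same argument.
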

\begin{proof}
Making use of  Lemma ~\ref{rad.ext}, we may once again assume the base field is algebraically closed.
 We next consider the commutative diagram where the vertical maps are the maps $\phi_{\rmd}^*$:
\[\xymatrix{{0} \ar@<1ex>[r] & {\bPic(\Sym^d({\C}))/\ell^n } \ar@<1ex>[r] \ar@<1ex>[d] & {\H^2_{et}(\Sym^d({\C}), \mu_{\ell^n})} \ar@<1ex>[r] \ar@<1ex>[d] & { \Br'(\Sym^d({\C}))_{\ell^n}} \ar@<1ex>[r] \ar@<1ex>[d]  & {0}\\
 {0} \ar@<1ex>[r] & {\bPic(\Q(r,d))/\ell^n } \ar@<1ex>[r]  & {\H^2_{et}(\Q(r, d), \mu_{\ell^n})} \ar@<1ex>[r]  & { \Br'(\Q(r,d))_{\ell^n}} \ar@<1ex>[r]   & {0}}\\
 \]
On identifying $\CH^1$ with the Picard-groups, the observations above prove that the two left-most vertical maps are split injections and that their co-kernels are isomorphic.
Therefore, a snake lemma argument readily shows that the last vertical map is also an isomorphism, thereby proving the 
theorem.
\end{proof}
\begin{remark} Clearly the above theorem proves the first isomorphism in Theorem ~\ref{main.thm.1}.
 \end{remark}

\section{Brauer groups of Prym varieties}

We begin the section by recalling Prym varieties and later obtain various results on their Brauer groups. 
We may assume throughout this section that $k$ denotes a separably closed field of characteristic different from $2$, though
the strongest results are possible only for the case $k=\mathbb{C}$ as shown below.

\subsection{Prym varieties} \cite[Chapter 12]{BL} and \cite{Mum}

Suppose $f:\tilde{{\C}}\rightarrow {\C}$ is a degree two Galois covering, between smooth projective  curves defined over a separably closed field. Denote by $\sigma$ the involution  acting on $\tilde{{\C}}$. Then we can write ${\C}= \frac{\tilde{{\C}}}{<\sigma>}$. 

Let $\tilde{g}:= \rm{ genus}(\tilde {\C})$ and $g:= \rm{ genus}({\C})$.
 
There is an induced morphism on the Jacobian varieties of $\tilde{{\C}}$ and ${\C}$:
$$
f^*: \J({\C}) \rightarrow \J(\tilde{{\C}})
$$
given by $l\mapsto f^*(l)$, the pullback of a degree zero line bundle $l$ on ${\C}$.

Recall the \textit{Prym variety} associated to the degree two covering $\tilde{{\C}}\rightarrow {\C}$ is defined by:
$$
\P:= \frac{\J(\tilde{{\C}})}{Image(f^*)}.
$$

By identifying $\P$ with the kernel of the endomorphism $\sigma +id$ on $\J(\tilde \C)$ (see \cite[section 3]{Mum}), we can write 
$$
\J(\tilde{{\C}})= Image(f^*)+ \P,
$$ 
and there is an isogeny
$$
\J({\C})\times \P \rightarrow \J(\tilde{{\C}}).
$$
 In particular, $dim(\P)=\tilde{g}-g$. If $f$ is unramified, then $\tilde{g}=2g-1$ and $dim(\P)=g-1$. If $f$ is ramified, then $\tilde{g}$ and $dim(\P)$ can be computed by the Riemann-Hurwitz formula.

\subsection{Special subvarieties of $\rm{Sym}^r(\tilde{{\C}})$}

For simplicity, we assume that $f$ is \textit{ramified} in the following discussion. Let $r$ denote a fixed integer $\ge 2\tilde g-1$.

Note that the action  by $\sigma$ on $\tilde{{\C}}$ naturally extends onto the symmetric products:
$$
\sigma: \Sym^r(\tilde{{\C}}) \rightarrow \Sym^r(\tilde{{\C}}),\,\,\,\,\,\ {\rm D}\mapsto  \sigma({\rm D}).
$$

Let $z\in \tilde{{\C}}$ be a ramification point of $f$: we will assume that $z$ is {\it a $k$-rational point}. Then $z$ is a fixed point for the action of $\sigma$ on $\tilde{{\C}}$, i.e., $\sigma(z)=z$.

This gives a $\sigma$-equivariant filtration:
\begin{equation}\label{equiv}
\tilde{{\C}}+ (r-1).z \subset Sym^2(\tilde{{\C}})+ (r-2).z \subset ... \subset Sym^{r-1}(\tilde{{\C}})+ z \subset Sym^r(\tilde{{\C}}).
\end{equation}

Consider the Abel-Jacobi map, from \S \ref{Poincare}, with respect to $z$:
$$
\Phi:\Sym^r(\tilde{{\C}})=\mathbb{P}(E_r) \rightarrow \J(\tilde{{\C}}).
$$

Denote the inverse image 
$$
\rmW^r_{\P}:=\Phi^{-1}(\P),
$$
and the subvarieties
$$
\rmW^s_{\P}:= W^r_{\P} \cap (\Sym^s(\tilde{{\C}})+ (r-s).z)
$$
for $1\leq s\leq r$. 
 
\subsection{Proof of Theorem ~\ref{main.thm.2}: Brauer groups of ${\rm W}^s_{\P}$ and $\P$}

\vskip .2cm \noindent

We start by recalling  the following results from Proposition ~\ref{LHT}: the maps induced by the closed immersion 
$i: \Sym^{d}(\tilde \C) \ra \Sym^{d+1}(\tilde \C)$
\[i^*:\H^2(\rm{Sym}^{d+1}({\tilde \C}),\Z) \rightarrow \H^2(\rm{Sym}^{d}({\tilde \C}),\Z) \mbox{ and}\]
\[i^*:\H^2_{et}(\rm{Sym}^{d+1}({\tilde \C}),\mu_{\ell^n}) \rightarrow \H^2_{et}(\rm{Sym}^{d}({\tilde \C}),\mu_{\ell^n})\]
are isomorphisms for $d \ge 3$, where the base field $k=\Cl$ for the first isomorphism and $k$ is any separably closed field,
with $char(k) \ne \ell$ for the second isomorphism.
\vskip .2cm
Since $r$ is assumed to be at least $\tilde g-1$, one may observe that
$$
\Phi:\rm{Sym}^r(\tilde{{\C}}) \rightarrow \J({\tilde \C})
$$
is a projective bundle. Hence $\rmW^r_{\P} \rightarrow \P$ is also a projective bundle.

\begin{theorem}
(i) The induced pullback map
$$
\Br'(\P) \rightarrow \Br'(\rmW^r_{\P})
$$
is an isomorphism.
\vskip .2cm
(ii) If $s$ is the largest integer $\le r$, so that $\rmW^{s-1}_P \ne \rmW^r_P$, and $dim (\rmW^s_{\P}) \ge 4$, then $\Br(\rmW^r_P)_{\ell^n} = \Br(\rmW^s_P)_{\ell^n} \ra \Br(\rmW^{s-1}_P)_{\ell^n}$ is surjective
as long as $\ell$ is a prime different from the characteristic $p$ of the base field. Moreover this map will be an isomorphism
if $\rmW^{s-1}_P$ is smooth and the induced map $\Pic(\rmW^s_P)/\ell^n \ra \Pic(\rmW^{s-1}_P)/\ell^n $ is also an isomorphism; the last condition is
satisfied if the base field is  the complex numbers, $dim(\rmW^s_{\P})\geq 4$ and ${\rm W}^{s-1}_{\P}$ is smooth.
\end{theorem}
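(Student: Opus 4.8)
The plan is to handle the two parts by the same strategy used for Theorem~\ref{Sym.and.Pic}: reduce Brauer group statements to statements about $\H^2_{et}(-,\mu_{\ell^n})$ and Picard groups via the Kummer short-exact sequence~\eqref{Kummer.seq.2}, and exploit the fibration structures recorded in the preceding subsection. For part (i), I first observe that since $r\ge 2\tilde g-1$, the Abel--Jacobi map $\Phi:\Sym^r(\tilde\C)\to \J(\tilde\C)$ is a projective bundle associated to the bundle $E_r$; base-changing along the closed immersion $\P\hookrightarrow \J(\tilde\C)$ shows $\rmW^r_\P=\Phi^{-1}(\P)\to\P$ is again a projective bundle. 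By the homotopy invariance of the prime-to-$p$ part of the Brauer group for projective bundles (\cite[p.~193]{Ga}, and \cite{Aus}, \cite{El}), the pullback $\Br'(\P)\to\Br'(\rmW^r_\P)$ is an isomorphism; as noted in the remark following the theorem, for $\ell\ne p$ there is no need to separate out torsion. Since $\P$ is an abelian variety and hence has no $p$-torsion pathology affecting $\Br'$, this gives (i) directly.

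For part (ii), the key geometric input is the filtration~\eqref{equiv}: intersecting with $\rmW^r_\P$ gives closed immersions $\rmW^{s-1}_\P\hookrightarrow\rmW^s_\P$ whose open complement $\rmW^s_\P\setminus\rmW^{s-1}_\P$ sits inside $\Sym^s(\tilde\C)\setminus(\Sym^{s-1}(\tilde\C)+z)$, which is a fiber-type piece over an affine base once one removes the ramification point $z$ — so the complement is affine (or at least the relevant cohomology-with-supports vanishes in the right range). I would then apply Proposition~\ref{weak.l} (which only needs the ambient space smooth and the complement affine, not the divisor ample nor even smooth) with $\rmX=\rmW^s_\P$, $\rmY=\rmW^{s-1}_\P$, $\d=\dim\rmW^s_\P\ge 4$, to conclude that $i^*:\H^2_{et}(\rmW^s_\P,\mu_{\ell^n})\to\H^2_{et}(\rmW^{s-1}_\P,\mu_{\ell^n})$ is an isomorphism (since $2\le \d-2$ when $\d\ge 4$). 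Feeding this into the commutative ladder of Kummer sequences~\eqref{Kummer.seq.2} for $\rmW^s_\P$ and $\rmW^{s-1}_\P$: the middle vertical map is an isomorphism, the left vertical map $\Pic(\rmW^s_\P)/\ell^n\to\Pic(\rmW^{s-1}_\P)/\ell^n$ is always at least surjective (restriction of line bundles along a closed immersion of the given type, arguing via the Bialynicki-Birula/Collino-type computation of Picard groups, or simply because $\H^1_{et}(\rmW^{s-1}_\P,\mu_{\ell^n})\to\cdots$ forces it), so a five-lemma/snake-lemma chase shows $\Br'(\rmW^s_\P)_{\ell^n}\to\Br'(\rmW^{s-1}_\P)_{\ell^n}$ is surjective; and if moreover $\rmW^{s-1}_\P$ is smooth and the left vertical map is an isomorphism then the right vertical map is too. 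Over $\C$, the extra smoothness hypothesis plus $\dim\ge 4$ lets one invoke the classical weak Lefschetz theorem for the ample divisor (Lemma~\ref{amplediv}, Proposition~\ref{LHT}(i)) to get the isomorphism on $\Pic/\ell^n$ as well, since $\Pic$ of an abelian-variety fibration is controlled by the base.

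The main obstacle I expect is verifying that the open complement $\rmW^s_\P\setminus\rmW^{s-1}_\P$ is genuinely affine (or that $\H^q_{et,c}$ of it vanishes for $q\le\d-2$): unlike the symmetric-power case, where $\Sym^{d+1}(\C\setminus\{\rmp_0\})$ is visibly affine, here the complement is cut out inside the fiber of $\Phi$ over the \emph{subvariety} $\P\subset\J(\tilde\C)$, so one must track how the linear-system fibration interacts with the Prym condition $\sigma+\mathrm{id}=0$. I would argue this by noting that $\rmW^s_\P$ fibers over $\P$ with fibers certain sub-linear-systems, that $\rmW^{s-1}_\P$ is the locus where the divisor contains $z$, and that removing $z$ corresponds fiberwise to passing to an affine open of a projective space; combined with the affineness of $\P$ being irrelevant (it is proper), one instead uses that the total complement maps to $\P$ with affine fibers and is quasi-affine, and then either upgrades to affine or works directly with the cohomology-with-supports vanishing as in the proof of Proposition~\ref{weak.l}. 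A secondary technical point is the passage from separably closed to algebraically closed base field, handled exactly as before via Lemma~\ref{rad.ext}, and the identification $\Br(\rmW^r_\P)_{\ell^n}=\Br(\rmW^s_\P)_{\ell^n}$ coming from the chain of weak-Lefschetz isomorphisms up to the largest $s$ with $\rmW^{s-1}_\P\ne\rmW^r_\P$, which is what makes the indexing in the statement consistent.
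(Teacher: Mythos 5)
Your overall strategy coincides with the paper's: part (i) via Gabber's theorem for the projective bundle $\rmW^r_{\P}\to\P$ (the Brauer class of the bundle is trivial because it is the projectivization of a vector bundle), and part (ii) via Proposition~\ref{weak.l} applied to $\rmW^{s-1}_{\P}\hookrightarrow \rmW^s_{\P}$ followed by the Kummer sequence~\eqref{Kummer.seq.2} and, over $\Cl$, the Grothendieck--Lefschetz theorem for $\Pic$. Two small corrections to your reading of the statement: the identification $\Br(\rmW^r_{\P})_{\ell^n}=\Br(\rmW^s_{\P})_{\ell^n}$ is not obtained from a chain of weak Lefschetz isomorphisms --- by the choice of $s$ as the \emph{largest} integer with $\rmW^{s-1}_{\P}\ne\rmW^r_{\P}$, one has the equality of schemes $\rmW^s_{\P}=\rmW^r_{\P}$; and over $\Cl$ the last assertion is the Grothendieck--Lefschetz theorem on Picard groups (\cite{SGA2}, \cite[Chapter IV, Corollary 3.3]{Hart}) rather than weak Lefschetz on cohomology plus an argument about $\Pic$ of the fibration.

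The one genuine gap is precisely the point you flag as ``the main obstacle'': the affineness of $\rmW^s_{\P}\setminus\rmW^{s-1}_{\P}$. Your proposed fallbacks do not work as stated: a quasi-affine scheme, or a scheme mapping to the proper variety $\P$ with affine fibers, need not be affine, and Artin's cohomological dimension bound used in Proposition~\ref{weak.l} genuinely requires affineness (not quasi-affineness). But the resolution is a one-liner you came within a hair of writing down:
$$
\rmW^s_{\P}\setminus\rmW^{s-1}_{\P}\;=\;\Phi^{-1}(\P)\cap\bigl(\Sym^s(\tilde{\C})\setminus(\Sym^{s-1}(\tilde{\C})+z)\bigr),
$$
and this is a \emph{closed} subscheme (not merely a subset) of $\Sym^s(\tilde{\C})\setminus(\Sym^{s-1}(\tilde{\C})+z)\cong\Sym^s(\tilde{\C}\setminus\{z\})$, because $\Phi^{-1}(\P)$ is closed in $\Sym^r(\tilde{\C})$. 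Since $\Sym^s(\tilde{\C}\setminus\{z\})$ is affine and a closed subscheme of an affine scheme is affine, the complement is affine, and it is smooth because it is open in $\rmW^s_{\P}=\rmW^r_{\P}$, which is a projective bundle over the smooth $\P$. With that observation inserted, your argument closes and agrees with the paper's.
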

\begin{proof} 
(i) We invoke Gabber's theorem \cite[p. 193]{Ga},  to obtain the exact sequence:
$$
0\rightarrow [{\C}l(\alpha)] \rightarrow \Br'(\P) \rightarrow \Br'(\rmW^r_{\P}) \rightarrow 0.
$$
Here $Cl(\alpha)$ denotes the class of the projective bundle in the Brauer group of $\P$ and $[{\C}l(\alpha)]$ denotes the
 subgroup generated by this class. However, since the projective bundle is a projectivization of  a vector bundle, see \S \ref{Poincare}, it is a trivial class in the Brauer group of $\P$. Hence the pullback map is an isomorphism of Brauer groups, thereby proving (i). 
\vskip .2cm
(ii) Next observe that the complement $\rmW^s_P-\rmW^{s-1}_P$ is open in $\rmW^s_{\P}=\rmW^r_{\P}$, and therefore smooth. Observe also that $\rmW^s_{\P}-\rmW^{s-1}_{\P} = (\Sym^s(\tilde \C) - \Sym^{s-1}(\tilde \C))\cap \Phi^{-1}(\P)$ is affine, which holds
 since $\Phi^{-1}(\P)$ is closed in $\Sym^r(\tilde \C)$ and $\Sym^s(\tilde \C) - \Sym^{s-1}(\tilde \C)$ is affine.
Now the proof of the weak-Lefschetz isomorphism in Proposition ~\ref{weak.l} shows that the conclusions there hold even if the closed
subvariety $\rmY$ is not smooth, as long as the complement $\rmX-\rmY$ is smooth and affine. 
Therefore, if $\rmW^s_P$ is chosen as in (ii), the restriction map $\H^2_{et}(\rmW^s_{\P}, \mu_{\ell^n}) \ra \H^2_{et}(\rmW^{s-1}_{\P}, \mu_{\ell^n})$
is an isomorphism. In view of the short-exact sequence in ~\eqref{Kummer.seq.2}, this readily implies the surjection of the Brauer groups in (ii). The assertion that this map will become an isomorphism when the
restriction $\Pic(\rmW^s_{\P})/\ell^n \ra \Pic(\rmW^{s-1}_{\P})/\ell^n $ is also an isomorphism follows once again from the short-exact sequence ~\eqref{Kummer.seq.2}.
\vskip .2cm
Finally, the assertion that one has the required weak-Lefschetz isomorphism for the Picard groups when $k =\Cl$ is the Grothendieck-Lefschetz theorem: see \cite{SGA2} or \cite[Chapter IV, Corollary 3.3]{Hart}.
\end{proof}
\begin{remark} Clearly the above theorem proves  Theorem ~\ref{main.thm.2}. If one also knows the schemes ${\rm W}^s_{\P}$ are all smooth, one can
obtain stronger results, for example, that the conclusions in (ii) hold for all $s$ as long as $\dim({\rm W}^s_{\P}) \ge 4$.
 \end{remark}



\begin{thebibliography}{AAAAA}

\bibitem[ACGH]{ACGH} 
 E. Arbarello, M. Cornalba, P. Griffiths, J. Harris, {\em Geometry of algebraic curves. Vol. I.} Grundlehren der Mathematischen Wissenschaften [Fundamental Principles of Mathematical Sciences], 267. Springer-Verlag, New York, (1985). 

\bibitem[AuGo]{Aus} M. Auslander and O. Goldman, {\em The Brauer group of a commutative ring,} Trans. Amer. Math.
Soc. 97 (1960), 367--409.
 
\bibitem[Bi]{Bi} E. Bifet, \emph{Sur les points fixes du sch\'ema Quot$_{\O_{\rmX}^{\oplus r}/{\rmX}/k}$ sous l'action du tore ${\mathbb G}_{m,k}{^r}$},
 C.R. Acad. Sci. Paris, \textbf{309}, s\'erie I, (1989), 609-612.

\bibitem[BL]{BL} C. Birkenhake and H. Lange, {\em Complex abelian varieties}, Springer, Berlin-New York, (2003).

\bibitem[BLR]{BLR} S. Bosch, W. Lutkebohmert and M. Raynaud, \emph{N\'eron Models}, Erg. der Math., 3 Folge, Band 21, (1980), Springer-Verlag, Berlin, Heidelberg.

\bibitem[BDH]{BDH}  I. Biswas, A. Dhillon, J. Hurtubise, {\em Brauer groups of Quot schemes.} Michigan Math. J. 64 (2015), no. \textbf{3}, 493--508.

\bibitem[BH]{BH} I. Biswas and Y. I. Holla, \emph{Brauer group of moduli of principal bundles over a curve}, J. Reine. Angew Math, (2013), \textbf{677}, 225-249.


\bibitem[Coll]{Coll} A. Collino, \emph{The rational equivalence ring of symmetric products of curves}, 
Illinois J. Math., \textbf{19}, no. 4, (1975), 567-583.

\bibitem[dB]{dB} S. del Bano, \emph{On the Chow motive of some Moduli Spaces}, J. reine angew. Math., \textbf{532}, (2001), 105-132.

\bibitem[dJ]{dJ} A. J.  de Jong, \emph{A result of Gabber}, Preprint. See https://www.math.columbia.edu/\~ \nobreak\hspace{.01em} dejong/papers/2-gabber.pdf.

\bibitem[Ed]{Ed} B. Edixhoven, {\em N\'eron models and tame ramification.} Compositio Math. 81 (1992), no. \textbf{3}, 291--306. 

\bibitem[El]{El} G. Elencwajg, {\em Brauer group of fibrations and symmetric products of curves} Proc. AMS 94
(1985), no. \textbf{4}, 597--602.

\bibitem[FGA]{FGA} Seminaire Bourbaki, no. 232, (1961/1962), Benjamin, NY, (1966).

\bibitem[FK]{FK}E. Freitag and R. Kiehl, \emph{\'Etale Cohomology and the Weil conjectures}, Erg. der Math, Springer, Berlin-New York, (1988).


\bibitem[Ga]{Ga} O. Gabber, {\em Some theorems on Azumaya algebras,} in: The Brauer group, pp. 129--209, Lecture
Notes in Math., 844, Springer, Berlin-New York, (1981).

\bibitem[Gr]{Gr} A. Grothendieck, \emph{Groupes de Brauer II}, in Dix Exposes sur la cohomologie des schemas, North Holland, (1968).

\bibitem[Hart]{Hart} R. Hartshorne, {\em Ample Subvarieties of Algebraic varieties}, Lect. Notes in Math, \textbf{156}, Springer, Berlin-New York, (1970).


\bibitem[Kl]{Kl} S. Kleiman, \emph{The Picard Scheme}, arXiv:math/0504020v1 [Math.AG] 1 April, 2005.

\bibitem[Matt-1]{Matt-1} A. Mattuck, \emph{Symmetric Products and Jacobians}, Amer. J. Math, \text{83}, (1961), 189-206.

\bibitem[Matt-2]{Matt-2} A. Mattuck, \emph{Picard bundles}, Illinois J. Math, \textbf{5}, issue 4, (1961), 550-564.

\bibitem[Mac]{Mac} I. G. MacDonald, \emph{Symmetric products of an algebraic curve}, Topology, \textbf{1}, (1962), 319-343.



\bibitem[Mi]{Mi} J. Milne, \emph{Etale Cohomology}, Princeton University Press, Princeton, (1981).

\bibitem[Mum]{Mum} D. Mumford, \emph{Prym Varieties I}, in {\it Contributions to Analysis}, A Collection of papers Dedicated to Lipman Bers,
Academic Press, (1974), 325-350.

\bibitem[Mur]{Mur} J. P. Murre, \emph{On contravariant functors from the category of preschemes over a field into the category of abelian groups},
Publ. Math. IHES, \textbf{23}, 5-43, (1964).

\bibitem[MV]{MV} F. Morel and V. Voevodsky, \emph{${\mathbb A}^1$-homotopy Theory of schemes}, Publ. IHES, \textbf{90}, (1999), 45-143.

\bibitem[Oort]{Oort} F. Oort, \emph{Sur le sch\'ema de Picard}, Bull. Soc. Math., Fr, \textbf{90}, 1-14, (1962).

\bibitem[Pan]{Pan} I. Panin, \emph{Oriented Cohomology theories of Algebraic Varieties II}, Homotopy, Homology and Applications,
\textbf{11}, (2009), 349-405.

\bibitem[Qu]{Qu} D. Quillen, \emph{Higher Algebraic K-theory I}, in Lect. Notes in Math, \textbf{341}, Springer, Berlin-New York, (1971).

\bibitem[Ro]{Ro} M. Rost, {\em Chow groups with coefficients} Doc. Math. 1 (1996), no. \textbf{16}, 319--393.

\bibitem[Sc]{Sc} S. Schroer, {\em Topological methods for complex-analytic Brauer groups.} Topology 44 (2005), no. \textbf{5}, 875--894.
\bibitem[Scb]{Scb} R. L. E. Schwarzenberger, \emph{Jacobians and symmetric products}, Illinois Jour. Math. \textbf{7}, (1963),
257-268.

\bibitem[SGA2]{SGA2} A. Grothendieck, \emph{Cohomologie locale des faisceaux coherents et Theoremes de Lefschetz Locale et Globale}, (1962),
North-Holland, Amsterdam, Paris.

\bibitem[SGA4]{SGA4} M.Artin, A. Grothendieck, J-L. Verdier, P. Deligne et B. Saint-Donat, \emph{Th\'eorie des Topos et Cohomologie \'Etale des sch\'emas},
Lecture Notes in Math, \textbf{305}, Springer-Verlag, (1973).

\end{thebibliography}
\end{document}